\documentclass[12pt,reqno]{amsart}
\usepackage{fullpage}
\allowdisplaybreaks
\usepackage{times}
\usepackage{colonequals}
\usepackage{amsmath,amssymb,amsthm,url}
\usepackage[utf8]{inputenc}
\usepackage[english]{babel}
\usepackage[alphabetic]{amsrefs}
\usepackage{bbm}
\usepackage{enumerate}
\usepackage{bm}
\usepackage{graphicx}
\usepackage{mathrsfs}
\usepackage[colorlinks=true, pdfstartview=FitH, linkcolor=blue, citecolor=blue, urlcolor=blue]{hyperref}
\newtheorem{thm}{Theorem}[section]
\newtheorem{lem}[thm]{Lemma}
\newtheorem{prop}[thm]{Proposition}
\newtheorem{cor}[thm]{Corollary}

\theoremstyle{definition}
\newtheorem{defn}[thm]{Definition}
\newtheorem{question}[thm]{Question}
\newtheorem{rem}[thm]{Remark}

\newtheorem{ex}[thm]{Example}

\newcommand{\PP}{\mathcal{P}}
\newcommand{\bP}{\mathbb{P}}

\newcommand{\F}{\mathbb{F}}

\newcommand{\nb}{\operatorname{nb}}

\AtBeginDocument{%
	\def\MR#1{}
}

\begin{document}

\title{Most plane curves over finite fields are not blocking}

\author{Shamil Asgarli}
\address{Department of Mathematics and Computer Science \\ Santa Clara University \\ 500 El Camino Real \\ USA 95053}
\email{sasgarli@scu.edu}

\author{Dragos Ghioca}
\address{Department of Mathematics \\ University of British Columbia \\ 1984 Mathematics Road \\ Canada V6T 1Z2}
\email{dghioca@math.ubc.ca}

\author{Chi Hoi Yip}
\address{Department of Mathematics \\ University of British Columbia \\ 1984 Mathematics Road \\ Canada V6T 1Z2}
\email{kyleyip@math.ubc.ca}

\subjclass[2020]{Primary: 14H50, 51E21; Secondary: 11G20, 11T55, 14N10, 05B25}
\keywords{plane curves, blocking sets, finite field, arithmetic statistics, probabilistic enumerative geometry}

\maketitle

\begin{abstract}
A plane curve $C\subset\mathbb{P}^2$ of degree $d$ is called \emph{blocking} if every $\mathbb{F}_q$-line in the plane meets $C$ at some $\mathbb{F}_q$-point. We prove that the proportion of blocking curves among those of degree $d$ is $o(1)$ when $d\geq 2q-1$ and $q \to \infty$. We also show that the same conclusion holds for smooth curves under the somewhat weaker condition $d\geq 3p$ and $d, q \to \infty$. Moreover, the two events in which a random plane curve is smooth and respectively blocking are shown to be asymptotically independent. Extending a classical result on the number of $\mathbb{F}_q$-roots of random polynomials, we find that the limiting distribution of the number of $\mathbb{F}_q$-points in the intersection of a random plane curve and a fixed $\mathbb{F}_q$-line is Poisson with mean $1$. We also present an explicit formula for the proportion of blocking curves involving statistics on the number of $\mathbb{F}_q$-points contained in a union of $k$ lines for $k=1, 2, \ldots, q^2+q+1$.
\end{abstract}

\section{Introduction}\label{sect:intro}

We start with the definition of the main combinatorial structure in this paper. As usual, $p$ denotes a prime, $q$ denotes a power of $p$, and $\F_q$ denotes the finite field with $q$ elements. A set of points $B\subseteq \bP^2(\F_q)$ is a \emph{blocking set} if every $\F_q$-line meets $B$. The simplest example of a blocking set is a union of $q+1$ distinct $\F_q$-points on a line. A blocking set $B$ is \emph{trivial} if it contains all the $q+1$ points of a line and is \emph{nontrivial} if it is not a trivial blocking set. 

Given a projective plane curve $C\subset\mathbb{P}^2$ defined over $\F_q$, we say that $C$ is a \emph{blocking curve} if $C(\F_q)$ is a blocking set. Moreover, $C$ is \emph{nontrivially blocking} if $C(\F_q)$ is a nontrivial blocking set. In our previous paper \cite{AGY23}, we systematically studied blocking plane curves. In particular, we showed that an irreducible plane curve is not blocking whenever its degree is small compared to $q$ \cite{AGY23}*{Theorem 1.2}. We also constructed a few families of geometrically irreducible or smooth nontrivially blocking curves. Our motivation for studying blocking curves stems from the exciting interplay between finite geometry and algebraic geometry developed in recent years; see the surveys \cites{S97b, SS98} for more instances. We refer to \cite{AGY23} for more motivation to study smooth blocking curves, including potential applications in algebraic geometry.

The present paper aims to study blocking curves through the lens of arithmetic statistics. We prove several precise results in this direction. In particular, we show that the property of being blocking is rare whenever the degree is large compared to $q$. 

\begin{thm}\label{thm:most-curves-are-not-blocking}
There exists a function $\psi(x)$ with $\displaystyle\lim_{x\to\infty}\psi(x)=0$ such that the following holds. Let $\F_q$ be a fixed finite field, and $d\geq 2q-1$. Among all plane curves of degree $d$ defined over $\F_q$, the fraction of blocking curves is at most $\psi(q)$. In particular, most plane curves are not blocking.
\end{thm}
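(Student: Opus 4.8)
The plan is to parametrize degree-$d$ curves by the vector space $V = H^0(\mathbb{P}^2, \mathcal{O}(d))$ of ternary forms, of dimension $N = \binom{d+2}{2}$, and to work with $f$ drawn uniformly from $V$; since the blocking property is scale-invariant and the locus $f=0$ carries negligible mass $q^{-N}$, the resulting probabilities agree with the uniform distribution on the set of curves up to a harmless error. The crux is a single elementary input: the evaluation map $\mathrm{ev}\colon V \to \mathbb{F}_q^{\mathbb{A}^2(\mathbb{F}_q)}$, $f \mapsto (f(P))_{P}$, is surjective as soon as $d \ge 2q-2$. Indeed, every function $\mathbb{A}^2(\mathbb{F}_q) \to \mathbb{F}_q$ is represented by a reduced polynomial in $x,y$ with each exponent at most $q-1$, hence of total degree at most $2q-2$; homogenizing to degree $d$ realizes any prescribed tuple of affine values. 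This is exactly where the hypothesis $d \ge 2q-1$ enters. A surjective linear map sends the uniform measure to the uniform measure, so the random vector $(f(P))_{P \in \mathbb{A}^2(\mathbb{F}_q)}$ is uniform on $\mathbb{F}_q^{q^2}$; in particular the coordinate blocks indexed by distinct horizontal lines are jointly independent.

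Next I would test blocking against a cheap family of lines. Consider the $q$ lines $L_c\colon y = c$ for $c \in \mathbb{F}_q$, each passing through the common point at infinity $P_\infty = [1:0:0]$ and carrying the $q$ affine points $(x,c)$, $x \in \mathbb{F}_q$. A blocking curve meets every $L_c$, and meeting $L_c$ means either $P_\infty \in C$ or $C$ contains some affine point of $L_c$. Writing $E = \{f(P_\infty) = 0\}$ and $A_c = \{\exists\, x \in \mathbb{F}_q : f(x,c) = 0\}$, distributivity of the common factor $E$ gives $\{C \text{ meets every } L_c\} = E \cup \bigcap_{c} A_c$, whence $\Pr[C \text{ blocking}] \le \Pr[E] + \Pr\big[\bigcap_c A_c\big]$.

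The two terms are now routine. Since evaluation at $P_\infty$ is the nonzero functional reading off the coefficient of $X^d$, we get $\Pr[E] = q^{-1}$. For the second term, each $A_c$ depends only on the block $\{f(x,c) : x \in \mathbb{F}_q\}$, and these blocks are disjoint and jointly independent uniform by the surjectivity above; hence
\[
\Pr\Big[\textstyle\bigcap_c A_c\Big] = \prod_{c \in \mathbb{F}_q} \Big(1 - \big(\tfrac{q-1}{q}\big)^{q}\Big) = \Big(1 - \big(1 - \tfrac1q\big)^{q}\Big)^{q}.
\]
Because $(1-\tfrac1q)^q \to e^{-1}$, the base $1 - (1-\tfrac1q)^q$ is bounded away from $1$, so this $q$-th power tends to $0$. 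Setting $\psi(x) = x^{-1} + \big(1 - (1 - x^{-1})^{x}\big)^{x}$ gives a function with $\psi(x) \to 0$ and $\Pr[C \text{ blocking}] \le \psi(q)$, as required. The only genuine obstacle is the surjectivity of $\mathrm{ev}$, i.e.\ that the $q^2$ affine points impose independent conditions on degree-$d$ forms; everything else is a union bound followed by an independence-driven product. Using all $q^2+q+1$ lines rather than this single pencil would sharpen the constant, but is unnecessary for the stated $o(1)$ bound.
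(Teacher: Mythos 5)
Your proposal is correct, and it reaches the theorem by a genuinely different route than the paper. The paper's proof goes through Theorem~\ref{thm:lower-upper-bounds} and Proposition~\ref{proportion-of-curves}: using the interpolation result (Proposition~\ref{prop:independence}, proved via unions of lines and the Homma--Kim polynomial $x^{d-(2q-2)}(y^{q-1}-x^{q-1})(z^{q-1}-x^{q-1})$), it lower-bounds the density of non-blocking curves of any fixed degree $d\geq 2q-1$ by $\lambda_q(1-1/q)$, decomposing the non-blocking event according to the first point $P_j$ of a fixed line $L_0$ that the curve misses and asking for a skew line through that point; since $\lambda_q(1-1/q)\to 1$, the blocking fraction is $o(1)$ (in fact exponentially small in $q$, of order $(1-1/e)^{q+1}$). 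You instead (i) prove that evaluation at the $q^2$ affine points is a surjective linear map on degree-$d$ forms once $d\geq 2q-2$ --- your reduced-polynomial argument is an elementary substitute for the affine part of Proposition~\ref{prop:independence}, in the same spirit as the Homma--Kim construction --- and then (ii) apply a union bound over the single pencil $\{y=c\}_{c\in\F_q}$ of lines through $P_\infty=[1:0:0]$, obtaining $\Pr[\text{blocking}]\leq 1/q+\bigl(1-(1-1/q)^q\bigr)^q$. All steps check out: the distributive identity $\bigcap_c(E\cup A_c)=E\cup\bigcap_c A_c$, the joint uniformity and block-independence of the affine values, the computation $\Pr[E]=1/q$ via the coefficient of $X^d$, and the handling of the zero polynomial and scaling are all sound (and consistent with how the paper itself defines densities on $S_d$). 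The trade-off: your proof is shorter and self-contained, but your bound decays only like $1/q$, because the event $P_\infty\in C$ is surrendered wholesale to the union bound; the paper's point-by-point decomposition along $L_0$ handles that case by moving to further points, yielding an exponentially small bound and, more importantly, the two-sided estimates on $\nb(q)$ that are reused elsewhere (e.g., in Theorem~\ref{thm:independence-refined}). Both bounds are $o(1)$, so your $\psi(x)=x^{-1}+\bigl(1-(1-x^{-1})^{x}\bigr)^{x}$ fully suffices for the stated theorem.
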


We have a similar result for smooth plane curves where the condition on the degree $d$ depends on the characteristic $p$ of the field $\F_q$. 

\begin{thm}\label{thm:most-smooth-curves-are-not-blocking}
Almost all smooth plane curves of degree $d$ over $\F_q$ are not blocking when $d, q\to\infty$ satisfying $d\geq \min \{2q,3p\}$.
\end{thm}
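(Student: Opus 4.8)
The plan is to establish the theorem by a second-moment argument applied to the number of external lines, feeding it one-line and two-line densities computed through an effective Bertini smoothness theorem with prescribed values at rational points. For a smooth curve $C$ of degree $d$, let $N(C)$ be the number of $\F_q$-lines containing no $\F_q$-point of $C$ (the \emph{external} lines); then $C$ is blocking exactly when $N(C)=0$. Writing $\mathbb{E}$ and $\operatorname{Var}$ for the mean and variance over smooth curves of degree $d$, Chebyshev's inequality yields
\[
P(C\text{ blocking}\mid C\text{ smooth})=P(N=0\mid\text{smooth})\le \frac{\operatorname{Var}(N\mid\text{smooth})}{\mathbb{E}[N\mid\text{smooth}]^2},
\]
so it suffices to prove the right-hand side is $o(1)$. (When $d\ge 2q$ one could instead bound $P(\text{smooth and blocking})\le P(\text{blocking})$ by Theorem~\ref{thm:most-curves-are-not-blocking} and divide by the smooth density, which is bounded below---indeed tends to $1$---so the essential new range is $3p\le d<2q$; the argument below is uniform across both.)

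Since $\mathrm{PGL}_3(\F_q)$ acts transitively on lines and on ordered pairs of distinct lines, the whole computation reduces to two conditional densities,
\[
\pi_1:=P(\ell_0\text{ external}\mid\text{smooth}),\qquad \pi_2:=P(\ell_0,\ell_1\text{ both external}\mid\text{smooth}),
\]
for a fixed line $\ell_0$ and a fixed pair $\ell_0\ne\ell_1$, via the identities $\mathbb{E}[N\mid\text{smooth}]=(q^2+q+1)\pi_1$ and $\mathbb{E}[N^2\mid\text{smooth}]=(q^2+q+1)\pi_1+(q^2+q+1)(q^2+q)\pi_2$.

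Next I would compute $\pi_1$ and $\pi_2$ by imposing local conditions point-by-point. A line $\ell_0$ is external iff $F(P)\ne0$ at each of its $q+1$ rational points $P$; since $F(P)\ne0$ already forces smoothness of $C$ at $P$, the joint local condition ``$C$ smooth and $P\notin C$'' has density $1-q^{-1}$ rather than the bare smoothness factor $1-q^{-3}$. Substituting these into the smooth-curve density $\zeta_{\bP^2}(3)^{-1}=\prod_x(1-|\kappa(x)|^{-3})$ and dividing by $P(\text{smooth})$ gives
\[
\pi_1=\left(\frac{1-q^{-1}}{1-q^{-3}}\right)^{q+1}(1+o(1))\longrightarrow e^{-1},
\]
while for two lines the relevant rational points are the $2q+1$ points of $\ell_0(\F_q)\cup\ell_1(\F_q)$, producing $\pi_2=\pi_1^2(1+O(q^{-1}))$---that is, asymptotic independence of the two avoidance events. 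Plugging $\pi_1=\Theta(1)$ and $\pi_2=\pi_1^2(1+O(q^{-1}))$ into the second moment, the leading $q^4$ contributions to $\mathbb{E}[N^2]$ and $\mathbb{E}[N]^2$ cancel, leaving $\operatorname{Var}(N\mid\text{smooth})=O(q^3)$ against $\mathbb{E}[N\mid\text{smooth}]^2=\Theta(q^4)$, so the Chebyshev bound is $O(q^{-1})=o(1)$.

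The hard part is that the densities $\pi_1,\pi_2$ above are the $d\to\infty$ limits of Poonen's sieve for a \emph{fixed} $q$ and a \emph{fixed} finite set of local conditions, whereas here the number of conditions ($\sim q$) grows while $d,q\to\infty$ simultaneously. The entire argument therefore rests on an effective, uniform version of Bertini smoothness with prescribed values at a growing family of rational points, and the degree hypothesis is exactly what the error analysis demands: $d\ge 2q$ is the threshold making the evaluations $(F(P))_{P\in\bP^2(\F_q)}$ jointly uniform---the same mechanism underlying Theorem~\ref{thm:most-curves-are-not-blocking}---while $d\ge 3p$ is the threshold controlling the characteristic-$p$ degeneration of the partial-derivative conditions that cut out the smooth locus. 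Making these two effective inputs precise, and checking that the singular-locus error terms remain $o(1)$ uniformly over the $\sim q$ imposed conditions, is where I expect the main technical work to lie.
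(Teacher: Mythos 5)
Your fallback for the range $d\geq 2q$ (bound $\Pr(\text{blocking}\mid\text{smooth})\leq \Pr(\text{blocking})/\Pr(\text{smooth})$ and invoke Theorem~\ref{thm:most-curves-are-not-blocking} together with a lower bound on the smooth density) is exactly what the paper does in that case. But in the essential new regime $3p\leq d<2q$ --- where $d$ can be vastly smaller than $q$, e.g.\ $d=3p$ and $q=p^{100}$ --- your second-moment argument has a genuine gap, and it is precisely the step you defer as ``technical work.'' Computing $\pi_1$ and $\pi_2$ conditioned on smoothness requires joint statistics of the values of $F$ at $q+1$ (resp.\ $2q+1$) prescribed rational points \emph{and} of the smoothness of $C_F$, i.e.\ a sieve with $\Theta(q)$ imposed local conditions. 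The only effective tool of this kind (the Bucur--David--Feigon--Lal\'in refinement of Poonen's sieve, Theorem~\ref{moment} in the paper) carries an error factor of the shape $q^{\#\text{conditions}}\cdot q^{-\lfloor d/p\rfloor-1}$, so controlling $\Theta(q)$ conditions forces $d\gtrsim pq$, not $d\geq 3p$. Your stated heuristic that ``$d\geq 3p$ is the threshold controlling the characteristic-$p$ degeneration of the partial-derivative conditions'' is not correct: in the paper, $d\geq 3p$ arises exactly because only the \emph{second moment} of the total point count is needed (two points' worth of conditions, giving an error $\sim q^2\cdot q^{-\lfloor d/p\rfloor-1}$), and that threshold does not scale to a growing family of conditions. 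No mechanism in your proposal replaces this missing input, so the Chebyshev bound $\operatorname{Var}(N\mid\text{smooth})/\mathbb{E}[N\mid\text{smooth}]^2=o(1)$ cannot be established along the proposed lines in the range where the theorem has new content.

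The paper's actual proof avoids conditional line statistics entirely by importing structure theory from finite geometry (Theorem~\ref{thm: lb}): a blocking set either contains all $\F_q$-points of a line, or contains a Baer subplane, or has size at least $q+1+c_pq^{2/3}$. Containing a line is ruled out by B\'ezout when $d<q+1$ and by a union bound with Proposition~\ref{prop:independence} when $d\geq q+1$; containing a Baer subplane is ruled out by a union bound over the $q^{3/2}(q^{3/2}+1)(q+1)$ Baer subplanes using only $5$ point-conditions; and the size bound is ruled out by Chebyshev's inequality applied to $\#C(\F_q)$ using $M_2(q,d)=o(q^{1/3})$ (Corollary~\ref{second_moment}, the sole place where $d\geq 3p$ is used). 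The first two events are bounded unconditionally and then transferred to the smooth ensemble by dividing by the smooth density (Corollary~\ref{cor: smooth}). In other words, the paper replaces your fine question ``does some line miss $C$?'' by the crude question ``is $\#C(\F_q)$ within $O(q^{2/3})$ of $q+1$, avoiding a few rare containment events?'' --- and it is exactly this reduction, unavailable in your setup, that makes $O(1)$ many imposed conditions (hence $d\geq 3p$) sufficient.
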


Note that Theorem~\ref{thm:most-smooth-curves-are-not-blocking} is somewhat weaker than Theorem~\ref{thm:most-curves-are-not-blocking} when $q$ is a prime number. However, when $q$ is a proper prime power (that is, of the form $p^n$ with $n\geq 2$), Theorem~\ref{thm:most-smooth-curves-are-not-blocking} provides a more refined result. For instance, we can deduce that the proportion of blocking curves of degree $d=\lfloor p\log q \rfloor$ over $\F_q$ tends to $0$ as $q\to\infty$.

Given a plane curve $C$ defined over $\mathbb{F}_q$, we say that a line $L$ is \emph{skew} to $C$ if $L\cap C$ has no $\mathbb{F}_q$-points. Using this language, a plane curve $C$ is not blocking if and only if $C$ admits a skew $\mathbb{F}_q$-line. Not only do we show that most curves are \emph{not} blocking, but we also expect that a plane curve of degree $d\geq q$ has many skew lines. More precisely, we prove the following effective result.

\begin{thm}\label{thm:expected-skew-lines}
Let $C\subset\mathbb{P}^2$ be a plane curve of degree $d$ defined over $\F_q$ satisfying $d\geq q$. Then the expected number of skew $\mathbb{F}_q$-lines for $C$ is
$$
\frac{q^2}{e} - \frac{q}{2e} -\frac{5}{24e}+O\bigg(\frac{1}{q}\bigg).
$$
In other words, we expect roughly 36.7879\% of all lines defined over $\mathbb{F}_q$ to be skew to $C$.
\end{thm}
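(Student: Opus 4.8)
The plan is to compute the expectation exactly by combining linearity of expectation with the transitivity of $\mathrm{PGL}_3(\F_q)$ on lines, reducing everything to the single probability that one fixed line is skew to a random curve, and then to expand the resulting closed form asymptotically.

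First I would write the number of skew lines as $\sum_{L}\mathbbm{1}[L\text{ is skew to }C]$, the sum running over the $q^2+q+1$ lines $L\subset\bP^2$ defined over $\F_q$, where $C=V(F)$ is drawn uniformly from the degree-$d$ curves (equivalently $F$ uniform in $V_d$, the space of ternary forms of degree $d$; the zero form and the passage from $V_d$ to $\mathbb{P}(V_d)$ contribute negligibly, since the skew condition is scale invariant and a skew restriction is automatically nonzero). By linearity the expectation is $\sum_L \bP[L\text{ skew to }C]$. Because $\mathrm{PGL}_3(\F_q)$ acts transitively on $\F_q$-lines and the substitution $F\mapsto F\circ g$ is a measure-preserving linear bijection of $V_d$, each summand equals a common value $\rho=\bP[L_0\text{ skew to }C]$ for a fixed line $L_0$, so the expected number of skew lines is exactly $(q^2+q+1)\,\rho$.

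The heart of the argument is to prove $\rho=\left(1-\tfrac1q\right)^{q+1}$, and this is where the hypothesis $d\ge q$ enters. Identifying $L_0\cong\bP^1$ and fixing representatives of its $q+1$ rational points $P_0,\dots,P_q$, the restriction $f=F|_{L_0}$ is a binary form of degree $d$; since the restriction map from ternary to binary forms of degree $d$ is linear and surjective, $f$ is uniformly distributed over binary forms of degree $d$. The line $L_0$ is skew to $C$ precisely when $f(P_i)\neq 0$ for all $i$. I would then invoke the fact that $q+1$ distinct points of $\bP^1$ impose independent linear conditions on the space of degree-$d$ binary forms exactly when $d\ge q$ (one constructs, for each $i$, a product of $q$ linear forms vanishing at the $P_j$, $j\neq i$, but not at $P_i$, which needs degree $\ge q$). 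This makes the evaluation map $f\mapsto(f(P_0),\dots,f(P_q))$ a surjective linear map of $\F_q$-vector spaces, hence with equidimensional fibers, so the pushforward of the uniform measure is uniform on $\F_q^{q+1}$. The coordinates $f(P_0),\dots,f(P_q)$ are therefore independent and uniform, and the probability that all are nonzero is $\left(1-\tfrac1q\right)^{q+1}$, giving $\rho=\left(1-\tfrac1q\right)^{q+1}$.

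Finally I would expand $(q^2+q+1)\left(1-\tfrac1q\right)^{q+1}$. Writing $\left(1-\tfrac1q\right)^{q+1}=\exp\!\big((q+1)\log(1-\tfrac1q)\big)$ and using $(q+1)\log(1-\tfrac1q)=-1-\tfrac{3}{2q}-\tfrac{5}{6q^2}+O(q^{-3})$ yields $\left(1-\tfrac1q\right)^{q+1}=e^{-1}\big(1-\tfrac{3}{2q}+\tfrac{7}{24q^2}+O(q^{-3})\big)$; multiplying by $q^2+q+1$ and collecting powers of $q$ produces $\tfrac{q^2}{e}-\tfrac{q}{2e}-\tfrac{5}{24e}+O(1/q)$, as claimed. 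The only genuine obstacle is the independence step: pinning down that $d\ge q$ is exactly the threshold for the $q+1$ rational points of the line to impose independent conditions, which is what turns the skew condition into a product of independent uniform nonvanishing events. The reduction by symmetry and the Taylor expansion are routine.
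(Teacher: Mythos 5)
Your proposal is correct, and its skeleton (linearity of expectation, an exact single-line skew probability of $\left(1-\tfrac{1}{q}\right)^{q+1}$ for $d\geq q$, then a Taylor expansion) matches the paper's; your expansion computations agree with the paper's equation for $(1-1/q)^{q+1}$ and the final collection of terms. Where you genuinely diverge is in how the single-line probability is obtained. The paper proves it (Proposition~\ref{prop:one-line}) by invoking Proposition~\ref{prop:independence} --- that $\F_q$-points of $\bP^2$ impose independent conditions on degree-$d$ plane curves, established by constructing unions of lines or Homma--Kim curves --- and then running inclusion--exclusion over the $q+1$ points of $L(\F_q)$. You instead restrict to the line: the restriction map from ternary to binary degree-$d$ forms is linear and surjective, so the restriction is uniform; then the joint evaluation map at the $q+1$ points of $\bP^1(\F_q)$ is a surjective linear map onto $\F_q^{q+1}$ once $d\geq q$ (and only then, by dimension count), so the evaluation vector is uniform and the probability that all coordinates are nonzero is immediate, with no inclusion--exclusion. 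This is in effect a $\bP^1$-analogue of the paper's Proposition~\ref{prop:distribution}, and for this one theorem it is shorter and more self-contained; the paper's route pays for its extra combinatorics by producing a tool (independence of conditions for \emph{arbitrary}, not necessarily collinear, point sets in $\bP^2$) that is reused throughout the rest of the paper (e.g., Proposition~\ref{proportion-of-curves} and Theorem~\ref{thm:lower-upper-bounds}), so for the paper the marginal cost of this theorem is essentially zero. Two trivia: your degree-$q$ product $\prod_{j\neq i}\ell_j$ must be padded by a factor $\ell_{j_0}^{\,d-q}$ (any $j_0\neq i$) to witness surjectivity in degree $d>q$, and the appeal to $\mathrm{PGL}_3(\F_q)$-transitivity is unnecessary since your argument gives the same probability for every line directly; neither affects correctness.
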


More generally, Poisson distribution can be used to predict the number of skew lines to a random plane curve; see Section~\ref{subsect:poisson}. As we will explain later in Remark~\ref{rmk:entin}, there is also a connection between Theorem~\ref{thm:expected-skew-lines} and Entin's work on the Chebotarev density theorem \cite{E21}.

Let $S_d$ be the set of homogeneous polynomials $F(X,Y,Z)$ of degree
$d$ over $\F_q$ including $0 \in S_d$ in order to make $S_d$ into a vector space, and let $S_d^{\rm ns} \subseteq S_d$ be the subset
of polynomials corresponding to smooth (\textbf{ns} stands for non-singular) curves 
$C_F: F(X,Y,Z)=0$. 
Define
\begin{equation*}
\nb(q) = \lim_{d\to\infty} \frac{\# \{F\in S_d : C_F \text{ is not blocking}\}}{\# S_d}.
\end{equation*}

We find an explicit lower and upper bound for $\nb(q)$.

\begin{thm}\label{thm:lower-upper-bounds}
Given a finite field $\F_q$, define 
$$
\lambda_q(x) = x \left(1-\left(1-x^q\right)^{q+1}\right) + x \sum_{j=1}^q  \left(1-x\right)^j \left(1-\left(1-x^q\right)^{q}\right).
$$
We have
$$
\lambda_q \left(1-\frac{1}{q}\right) \leq \nb(q) \leq 1 - \lambda_q \left(\frac{1}{q}\right) .
$$
\end{thm}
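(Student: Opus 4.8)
The plan is to reduce the limit defining $\nb(q)$ to a purely combinatorial probability in which the $q^2+q+1$ points of $\bP^2(\F_q)$ lie on the curve independently, and then to read off both bounds from explicit disjoint sub-events. First I would fix a representative $v_P\in\F_q^3$ for each $P\in\bP^2(\F_q)$ and study the linear evaluation map $\mathrm{ev}\colon S_d\to\F_q^{q^2+q+1}$, $F\mapsto (F(v_P))_P$. I claim it is surjective once $d\geq q^2$: for a fixed point $P$, the product of one linear form vanishing on each of the $q^2$ lines avoiding $P$, multiplied by a suitable power of a linear form not through $P$, is a form of degree $d$ that vanishes at every point of $\bP^2(\F_q)$ except $P$, and these forms span the target. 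Surjectivity of a linear map pushes the uniform measure on $S_d$ to the uniform measure on $\F_q^{q^2+q+1}$, so the events $\{F(v_P)=0\}$ are independent, each of probability $1/q$. Since being non-blocking depends only on this zero-pattern, for every $d\geq q^2$ the proportion of non-blocking curves equals the probability, in this independent model, that some line is skew; hence $\nb(q)$ equals that probability exactly, and it remains to bound it.

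For the bounds I would use the identity $x\sum_{j=1}^q(1-x)^j=(1-x)(1-(1-x)^q)$ to rewrite
\[
\lambda_q(x)=x\bigl(1-(1-x^q)^{q+1}\bigr)+(1-x)\bigl(1-(1-x)^q\bigr)\bigl(1-(1-x^q)^q\bigr),
\]
which matches the two events below. Write $x=1-\tfrac1q$ for the off-curve probability, and recall that a line is skew exactly when all $q+1$ of its points lie off $C_F$. Fix a point $P$ and a line $\ell\ni P$ with points $P=P_0,P_1,\dots,P_q$. Let $E_A$ be the event that $P$ lies off $C_F$ and at least one of the $q+1$ lines through $P$ is skew; as these lines meet only at $P$, conditioning on $P$ being off makes them skew independently with probability $x^q$, so $\Pr[E_A]=x(1-(1-x^q)^{q+1})$. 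Let $E_B$ be the event that $P\in C_F$, the first off-curve point of $\ell$ is some $P_j$ with $1\leq j\leq q$, and some line through $P_j$ other than $\ell$ is skew; the points used here lie off $\ell$ and so are disjoint from the conditioned points $P_0,\dots,P_j$, giving $\Pr[E_B]=\sum_{j=1}^q(1-x)^jx(1-(1-x^q)^q)$. Since $E_A$ and $E_B$ disagree on whether $P\in C_F$ they are disjoint, and each forces a skew line, so $\nb(q)\geq\Pr[E_A]+\Pr[E_B]=\lambda_q(1-\tfrac1q)$.

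For the upper bound I would dualize: a curve is trivially blocking as soon as some line lies entirely on $C_F$, so I would lower bound the blocking probability by repeating the construction with the roles of on and off interchanged, now taking $x=\tfrac1q$ to be the on-curve probability. The event ``$P\in C_F$ and some line through $P$ is fully on $C_F$'' has probability $x(1-(1-x^q)^{q+1})$, and the companion event built from the first on-curve point of $\ell$ contributes $\sum_{j=1}^q(1-x)^jx(1-(1-x^q)^q)$; these are disjoint and each forces a fully-on line, hence a blocking curve. Thus $1-\nb(q)\geq\lambda_q(\tfrac1q)$, that is, $\nb(q)\leq 1-\lambda_q(\tfrac1q)$.

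The main obstacle is the first step: verifying that for large $d$ the incidence pattern is \emph{exactly} an independent model, since this is what collapses the limit into a finite computation. After that, the work is the careful bookkeeping in the two sub-event constructions above, where the only delicate point is to choose the events disjointly and so that the point sets invoked for independence genuinely avoid the conditioned points.
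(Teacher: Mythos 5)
Your proposal is correct and takes essentially the same route as the paper: after reducing to the i.i.d.\ point model, your events $E_A$, $E_B$ and their ``dual'' versions are exactly the paper's disjoint families $\mathcal{D}_0,\mathcal{D}_1,\dots,\mathcal{D}_q$ (skew line through the first off-curve point of $L_0$) and $\mathcal{B}_0,\mathcal{B}_1,\dots,\mathcal{B}_q$ (fully contained line through the first on-curve point of $L_0$), just with the tail terms grouped via the geometric-series identity. The only minor difference is that you establish the independent model by a surjectivity construction of degree $q^2$, whereas the paper uses its Proposition~\ref{prop:independence} (via the Homma--Kim construction) to get the sharper threshold $d\geq 2q-1$; since the theorem only concerns the limit $\nb(q)$, your weaker threshold is entirely sufficient.
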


In fact, the conclusion of Theorem~\ref{thm:lower-upper-bounds} holds even if $\nb(q)$ is replaced with the density of non-blocking curves of fixed degree $d$ as long as $d \geq 2q-1$; see Proposition~\ref{proportion-of-curves}.

The sequence $\lambda_q\left(1-\frac{1}{q}\right)$ is increasing in $q$ and tends to $1$ as $q\to\infty$, thus $\lim_{q\to\infty} \nb(q)=1$. In particular, Theorem~\ref{thm:lower-upper-bounds} implies
$$
\nb(q) \geq \lambda_{11}\left(1-\frac{1}{11}\right) >0.994
$$
for each $q\geq 11$. We conclude that a random plane curve of a large degree over $\F_q$ does \emph{not} give rise to a blocking set with probability at least $0.99$, provided that $q\geq 11$. 

Similarly, we can define the analogous constant for smooth curves:
\begin{equation*}
\nb^{\mathrm{ns}}(q) = \lim_{d\to\infty} \frac{\# \{F\in S^{\mathrm{ns}}_d : C_F \text{ is not blocking}\}}{\# S^{\mathrm{ns}}_d}.
\end{equation*}
Theorem~\ref{thm:most-curves-are-not-blocking} and Theorem~\ref{thm:most-smooth-curves-are-not-blocking} provide effective estimates on the above two densities. In particular, we have $\lim_{q\to\infty} \nb(q)=1$ and $\lim_{q\to\infty} \nb^{\mathrm{ns}}(q)=1$, that is, almost all curves are not blocking, and the statement remains true when we restrict to smooth curves. It follows immediately that $\lim_{q\to\infty} \frac{\nb^{\mathrm{ns}}(q)}{\nb(q)}=1$. However, it is not clear whether the following limit 
$$
\lim_{q\to\infty} \frac{1-\nb^{\mathrm{ns}}(q)}{1-\nb(q)}
$$
exists and has any meaningful value. We will show that the limit above also approaches $1$.

\begin{thm}\label{thm:smooth-blocking-independence}
We have
$$
\lim_{q\to\infty} \frac{1-\nb^{\mathrm{ns}}(q)}{1-\nb(q)}=1.
$$
In particular, the events of being smooth and being blocking are asymptotically independent.
\end{thm}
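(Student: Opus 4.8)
The plan is to pass to the limiting local model supplied by Poonen's Bertini theorem with Taylor coefficients, which turns both densities into a transparent finite computation. For each closed point $P$ of $\bP^2$ record the $1$-jet of $F$ at $P$; as $d\to\infty$ these jets become independent and uniformly distributed. At an $\F_q$-point $P$, working in an affine chart, the relevant datum is $(F(P),\partial_1 F(P),\partial_2 F(P))$, uniform in $\F_q^3$, so that $P\in C_F$ with probability $1/q$ and $C_F$ is singular at $P$ with probability $q^{-3}$, independently over the $N:=q^2+q+1$ rational points. Let $T=C_F(\F_q)$. The blocking event $A$ and the event $S_0$ that $C_F$ is smooth at every $\F_q$-point both depend only on the rational jets, whereas the event $S_1$ that $C_F$ is smooth at every closed point of degree $\ge 2$ depends on a disjoint, hence independent, family of jets. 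Consequently $1-\nb(q)=\Pr[A]$ and, after cancelling the common factor $\Pr[S_1]$,
\[
1-\nb^{\mathrm{ns}}(q)=\Pr[A\mid S_0\cap S_1]=\Pr[A\mid S_0],
\]
all probabilities being computed in the product model on the $N$ rational jets. (In the value coordinate this model is already exact for $d\ge 2q-1$, which is what makes $\nb(q)$ stabilize; cf. Proposition~\ref{proportion-of-curves}.)

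Next I would make the conditioning explicit. Given $T$, each $P\in T$ is a smooth point of $C_F$ independently with probability $1-q^{-2}$, so $\Pr[S_0\mid T]=(1-q^{-2})^{|T|}=:w(T)$ depends only on $|T|$, where $|T|\sim\mathrm{Bin}(N,1/q)$. Therefore
\[
\frac{1-\nb^{\mathrm{ns}}(q)}{1-\nb(q)}=\frac{\Pr[A\mid S_0]}{\Pr[A]}=\frac{\mathbb{E}[\mathbbm{1}_A\,w]}{\mathbb{E}[\mathbbm{1}_A]\,\mathbb{E}[w]}=\frac{\mathbb{E}[w\mid A]}{\mathbb{E}[w]}.
\]
A direct generating-function computation gives $\mathbb{E}[w]=(1-q^{-3})^{N}\to 1$, so the theorem reduces to the single estimate $\mathbb{E}[w\mid A]\to 1$; equivalently, since $0\le 1-w(T)\le q^{-2}|T|$, it reduces to showing $\mathbb{E}[\,|T|\mid A\,]=o(q^2)$. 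In words: a random plane curve, conditioned on being blocking, carries few rational points.

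The heart of the argument --- and the step I expect to be the main obstacle --- is this last bound. The only quantitative information about $A$ I would use is the trivial-blocking lower bound $\Pr[A]\ge q^{-(q+1)}$, namely the probability that $T$ contains a fixed line. To estimate $\mathbb{E}[\,|T|\,\mathbbm{1}_A]$ I would split at $K=q\ln q$: the contribution of $|T|\le K$ is at most $K\,\Pr[A]=o(q^2)\Pr[A]$, while the contribution of $|T|>K$ is bounded by the unconditional upper tail $\mathbb{E}[\,|T|\,\mathbbm{1}_{|T|>K}]$ of $\mathrm{Bin}(N,1/q)$, whose mean is $\sim q$. A Chernoff bound makes this tail at most $e^{-(1+o(1))q\ln q\,\ln\ln q}$, which is far smaller than $q^2\Pr[A]\ge q^{1-q}$. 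The subtlety to be careful about is exactly the competition between the vast number of large blocking sets and the $q^{-|T|}$ weight they carry; the clean resolution is that the total mass of all configurations with $|T|>q\ln q$ --- blocking or not --- is already negligible next to the trivial-blocking lower bound, so that no delicate enumeration of large blocking sets is needed.

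Putting the pieces together, $\mathbb{E}[w\mid A]=1-o(1)$ and $\mathbb{E}[w]=1-o(1)$, whence the displayed ratio tends to $1$; this is the asserted asymptotic independence of smoothness and blocking.
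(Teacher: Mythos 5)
Your proposal is correct and follows essentially the same route as the paper's own proof: your reweighting identity $\frac{1-\nb^{\mathrm{ns}}(q)}{1-\nb(q)}=\frac{\mathbb{E}[w\mid A]}{\mathbb{E}[w]}$ with $w=(1-q^{-2})^{|T|}$ is exactly the paper's ratio formula \eqref{eq:ratio-nu-ns-to-nu} (there obtained by quoting the smooth-curve point distribution from \cite{BDFL10} rather than re-deriving it from Poonen's jet model), and your crucial estimate that configurations with many rational points are negligible against the trivial-blocking lower bound $\Pr[A]\geq q^{-(q+1)}$ is precisely the paper's bound \eqref{eq:large-blocking-sets}. The only differences are implementation details: you truncate at $|T|>q\ln q$ and use a Chernoff tail, whereas the paper truncates at $q^{2}/\log q$ and uses the cruder count $2^{q^2+q+1}q^{-q^{2}/\log q}$; both suffice.
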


It is necessary to take the limit as $q\to\infty$ in the previous theorem. Indeed, as we will see in  Theorem~\ref{thm:independence-refined}, the value of the fraction $\frac{1-\nb^{\mathrm{ns}}(q)}{1-\nb(q)}$ is strictly less than $1$ when $q$ is large. Similar techniques may be applied to combinatorial properties
of plane curves over $\F_q$ other than blocking lines. See remarks in Section~\ref{sect:independence-smooth-blocking}.

\subsection*{Related work} The study of combinatorial arrangements arising from plane curves dates back to 1950s. For example, Segre~\cite{S62} and Lombardo-Radice~\cite{LR56} constructed complete $(k, d)$-arcs in the plane from curves of degree $d$; more recently, Bartoli, Giulietti, and Zini \cite{BGZ16} constructed such arcs from curves of degree $d+1$. In these papers, $d$ is relatively small. Our focus is elucidating the interaction between arithmetic statistics and finite geometry where $d$ can be large. We briefly mention a few recent related works in this area. Let $C$ be a geometrically irreducible plane curve over $\F_q$ with degree $d$. Fix an integer $0\leq k \leq d$. Makhul, Schicho, and Gallet \cite{MSG20} studied the probability for a random line in $\bP^2(\F_{q^N})$ to intersect $C$ with exactly $k$ points, as $N \to \infty$. Yet another research avenue is arithmetic statistics of plane curves satisfying (or not satisfying) Bertini's theorem. Recently, Asgarli and Freidin \cite{AF21} computed upper and lower bounds for the density of plane curves over $\F_q$ that admit no transverse line over $\F_q$ in the setting when $d\to\infty$ (and $q$ is fixed). 

\subsection*{Outline of the paper} In Section~\ref{sect:prelim}, we review relevant background material from finite geometry and algebraic geometry. We also develop some of the key ingredients necessary for the later sections. Section~\ref{sect:skew-lines} highlights a linear independence argument, namely Proposition~\ref{prop:independence}, with application to the distribution of point counts and skew lines to plane curves. In Section~\ref{sect:proportion-general-case}, we prove our main Theorem~\ref{thm:most-curves-are-not-blocking}. This is achieved by proving Theorem~\ref{thm:lower-upper-bounds}, which gives lower and upper bounds on $\nb(q)$. We apply the existing machinery to deduce Theorem~\ref{thm:expected-skew-lines}. In Section~\ref{sect:proportion-smooth-case}, we focus on smooth curves and prove Theorem~\ref{thm:most-smooth-curves-are-not-blocking}, and in Section~\ref{sect:independence-smooth-blocking}, we prove Theorem~\ref{thm:smooth-blocking-independence}. Finally, Section~\ref{sect:proportion-small-fields} is devoted to finding the exact natural density of blocking plane curves over $\F_q$ when $q=2, 3, 4$. 

\section{Preliminaries}\label{sect:prelim}

\subsection{Collection of known results on blocking sets}

Understanding the possible sizes of blocking sets is an important problem in finite geometry. This topic has been studied extensively, and the following theorem summarizes some of the best-known lower bounds on the size of a nontrivial blocking set. Recall that if $q$ is a square, a {\em Baer subplane} in $\bP^2(\F_q)$ is a subplane with size $q+\sqrt{q}+1$. It is known that a Baer subplane is a blocking set. As a result, the lower bound $q+\sqrt{q}+1$ on $B$ is achieved by Baer subplanes, and in fact, only by Baer subplanes \cites{B70, B71}. Moreover, the lower bound can be improved significantly if $B$ avoids any Baer subplane \cite{BSS99}. 

\begin{thm}\label{thm: lb}
Let $q=p^r$. Let $c_2=c_3=2^{-1/3}$ and $c_p=1$ for $p>3$. Let $B$ be a nontrivial blocking set in $\bP^2(\F_q)$. Then the following lower bounds on $|B|$ hold:
\begin{itemize}
    \item (Bruen \cites{B70,B71}) $|B| \geq q+\sqrt{q}+1$. Moreover, the equality holds if and only if $q$ is a square and $B$ is a Baer subplane.
    \item (Blokhuis \cite{B94}) If $r=1$ and $p>2$, then $|B| \geq 3(q+1)/2$, where the lower bound is sharp by considering a projective triangle.
    \item (Sz\H{o}nyi \cite{S97}*{Theorem 5.7}) If $p>2$, $r=2$, and $B$ does not contain a Baer subplane, then $|B| \geq 3(q+1)/2$, where the lower bound is sharp by considering a projective triangle.
    \item (Blokhuis, Storme,  and Sz\H{o}nyi \cite{BSS99}*{Theorem 1.1}) If $r$ is odd, then $|B|\geq q+1+c_pq^{2/3}$.
    \item (Blokhuis, Storme,  and Sz\H{o}nyi \cite{BSS99}*{Theorem 1.1}) If $r$ is even, $q>16$ and $B$ does not contain a Baer subplane. Then $|B|\geq q+1+c_pq^{2/3}$.
\end{itemize}
\end{thm}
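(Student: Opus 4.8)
The plan is to recognize that this statement is a compilation of five established lower bounds from finite geometry, so rather than seeking a single unified argument I would establish each bullet independently by recalling the proof from its cited source, using a common vocabulary of tangent lines (meeting $B$ in exactly one point), secant lines, and external points. Throughout I would write $|B| = q + 1 + k$ and aim to lower-bound the excess $k$, since the nontriviality hypothesis guarantees (after passing to a minimal blocking set) that every point of $B$ lies on at least one tangent line: if some $P \in B$ carried no tangent, then every line through $P$ would meet $B \setminus \{P\}$, contradicting minimality.

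For Bruen's foundational bound $|B| \geq q + \sqrt q + 1$, I would fix a tangent line $t$ at a point $P$ and count tangent and secant lines through the $q$ external points of $t$. For each external point $R$, the $q$ lines through $R$ other than $t$ cover all of $B \setminus \{P\}$; comparing the bound that each secant meets $B$ in at most $q$ points (nontriviality) with the count of covered points produces, after summing over the $q$ external points and combining with the global incidence count $\sum_{\ell}|\ell \cap B| = |B|(q+1)$, a quadratic inequality in $k$ whose solution forces $k \geq \sqrt q$. The equality case would then be handled by invoking the classification that a blocking set meeting the bound must be a Baer subplane, which is exactly the minimal configuration recalled before the theorem.

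The remaining three bounds — Blokhuis's $3(q+1)/2$ for prime $q$, Sz\H{o}nyi's analogue for $r=2$, and the Blokhuis--Storme--Sz\H{o}nyi bound $q + 1 + c_p q^{2/3}$ — I would derive by the Rédei polynomial method: one encodes the direction set of $B$ as a bivariate polynomial, exploits its divisibility by factors of $x^q - x$, and in the deeper cases associates an algebraic curve to $B$ and bounds its $\F_q$-points by Weil-type estimates. The main obstacle, were one to reprove these from scratch, is precisely this polynomial machinery: the elementary tangent-counting only yields the $\sqrt q$ threshold, and pushing to $3(q+1)/2$ or to $q + 1 + c_p q^{2/3}$ requires the delicate theory of lacunary polynomials over $\F_p$ together with characteristic-sensitive curve estimates, which explains both the dependence of $c_p$ on $p$ and the separate treatment of odd versus even $r$. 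Since the present paper uses only the inequalities themselves and not their internal mechanics, the cleanest route is to cite \cites{B70,B71,B94,S97,BSS99} directly for the four deeper items and to include the short counting argument only for the Bruen bound.
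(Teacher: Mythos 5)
Your proposal matches the paper's treatment: Theorem~\ref{thm: lb} is stated there purely as a compilation of known results, with each bullet attributed to its source (\cites{B70,B71}, \cite{B94}, \cite{S97}, \cite{BSS99}) and no proof given, exactly as you conclude is the cleanest route. Your supplementary sketch of Bruen's tangent-counting argument and of the lacunary/R\'edei polynomial machinery behind the deeper bounds is accurate in outline but goes beyond what the paper itself supplies.
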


In Section~\ref{sect:proportion-smooth-case}, we will use the exponent $2/3$ in \cite{BSS99}*{Theorem 1.1} in a crucial way. 

\subsection{Statistics on the number of rational points of smooth curves}
Let $S_d$ be the set of homogeneous polynomials $F(X,Y,Z)$ of degree
$d$ over $\F_q$, and let $S_d^{\rm ns} \subseteq S_d$ be the subset
of polynomials corresponding to smooth curves
$C_F: F(X,Y,Z)=0$. The celebrated Hasse-Weil bound asserts that
$$
q+1-(d-1)(d-2)\sqrt{q} \leq \#C_F(\F_q) \leq q+1+(d-1)(d-2)\sqrt{q}
$$
for each $F \in S_d^{\rm ns}$. Note that the Hasse-Weil bound already eliminates the possibility of a low degree curve being blocking given Theorem~\ref{thm: lb}; we refer to \cite{AGY23}*{Section 3} for stronger statements. However, in this paper, we mainly work on the case when the degree is large, so relying on Hasse-Weil bound alone is insufficient.

From a probabilistic point of view, one anticipates that the expectation of $\#C_F(\F_q)$ is $q+1$, and the variance of $\#C_F(\F_q)$ is small. Note that Hasse-Weil bound gives the window for the number of $\F_q$ points where $q+1$ is the center of the interval. It is reasonable to expect that half of the curves should lie on one side of the interval. Note that this heuristic immediately implies that at least half of smooth curves are not nontrivially blocking in view of Theorem~\ref{thm: lb}. Bucur, David, Feigon, and Lal\'in \cite{BDFL10} proved a much stronger statement: when the degree $d$ is sufficiently large compared to $q$, the number of rational points tends to the standard normal distribution up to some normalization.

Let $X_1, \dots, X_{q^2+q+1}$ be $q^2+q+1$ independent and identically distributed (henceforth shortened to i.i.d.) random variables taking the
value $1$ with probability $(q+1)/(q^2+q+1)$ and the value $0$ with probability $q^2/(q^2+q+1).$ The key ingredient in their proof is the following random model: $\#C_F(\F_q) \approx X_1+X_2+\cdots+X_{q^2+q+1}$. In other words, the random model predicts that a random smooth curve will pass through each particular point with probability $(q+1)/(q^2+q+1)$, and all such events are independent. By estimating the corresponding moments, they showed that the random model above portrays the truth with high accuracy.

\begin{thm}[{\cite{BDFL10}*{Theorem 1.3}}]\label{moment}
Let $k$ be a positive integer, and let
$$
M_k(q,d) = \frac{1}{\# {S}_d^{\rm ns}} \sum_{F \in {S}_d^{\rm ns}} \left(
\frac{\# C_F(\F_q) - (q+1)}{\sqrt{q+1}} \right)^k.
$$
Then,
\begin{eqnarray*}
M_k(q,d) &=& {\mathbb{E}} \left( \left( \frac{1}{\sqrt{q+1}} \left( \sum_{i=1}^{q^2+q+1} X_i - (q+1) \right) \right)^k \right) \\
&\times & \left(1+ O\left( q^{\min(k, q^2+q+1)} \left( q^{-k}d^{-1/3} + (d-1)^2 q^{-\min\left(\left\lfloor \frac{d}{p}\right\rfloor+1, \frac{d}{3} \right)} + d q^{-\left\lfloor\frac{d-1}{p}\right\rfloor-1}\right)\right) \right).
\end{eqnarray*}
\end{thm}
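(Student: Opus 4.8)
The plan is to follow the strategy of Bucur, David, Feigon, and Lalín: expand the $k$-th moment into a sum over $k$-tuples of $\F_q$-points, reduce the problem to counting smooth curves through prescribed point configurations, and then compare term-by-term with the analogous expansion for the random model $X_1 + \cdots + X_{q^2+q+1}$.

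First I would linearize the point count. For each $P \in \bP^2(\F_q)$ set $c_F(P) = \mathbbm{1}[F(P) = 0]$, so that $\#C_F(\F_q) = \sum_P c_F(P)$ and
$$
\#C_F(\F_q) - (q+1) = \sum_{P \in \bP^2(\F_q)} \left( c_F(P) - \frac{q+1}{q^2+q+1} \right).
$$
Raising to the $k$-th power and expanding produces a sum indexed by $k$-tuples $(P_1, \dots, P_k)$; collecting tuples according to their underlying set $T$ of distinct points, the moment $M_k(q,d)$ becomes a finite $\Z$-linear combination of the averages $\frac{1}{\#S_d^{\rm ns}} \sum_{F \in S_d^{\rm ns}} \prod_{P \in T} c_F(P)$, which is exactly the probability that a random smooth curve passes through every point of $T$.

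The key input is an estimate for this probability. For a set $T$ of $s$ distinct points imposing independent linear conditions on degree-$d$ forms, one expects
$$
\frac{1}{\#S_d^{\rm ns}} \sum_{F \in S_d^{\rm ns}} \prod_{P \in T} c_F(P) = \frac{1}{q^s}\bigl(1 + (\text{error})\bigr).
$$
This is where Poonen's Bertini smoothness theorem over finite fields, in its effective form via the Bucur--Kedlaya sieve, enters: vanishing at the $s$ points of $T$ cuts out a codimension-$s$ linear subspace of $S_d$, and within that subspace the density of smooth members equals the Poonen density up to the sieve error, provided $d$ is large. The smooth densities in the full space and the restricted space cancel to leading order, leaving the factor $q^{-s}$. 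The terms $q^{-k}d^{-1/3}$, $(d-1)^2 q^{-\min(\lfloor d/p\rfloor + 1, d/3)}$, and $d\, q^{-\lfloor (d-1)/p\rfloor - 1}$ are precisely the quantitative bounds from truncating the sieve at medium degree and from the singular/points-at-infinity contribution in the Bucur--Kedlaya analysis; the latter carries the characteristic-$p$ dependence because the partial derivatives $\partial F/\partial X$ have degree $d-1$ and the $p$-th power phenomena govern the high-degree range. The same expansion applied to the random model gives $\mathbb{E}\bigl[(\sum_i X_i - (q+1))^k\bigr]$ as the identical linear combination of products $\prod_{P \in T} \Pr[X_P = 1] = \bigl((q+1)/(q^2+q+1)\bigr)^{|T|}$, and since $(q+1)/(q^2+q+1) = q^{-1}\bigl(1 + O(q^{-2})\bigr)$ matches $q^{-s}$ above, the two sides agree up to the stated relative error. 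The combinatorial prefactor $q^{\min(k,\,q^2+q+1)}$ counts the number of configurations $T$: when $k \leq q^2+q+1$ the tuple entries can be distinct, while once $k$ exceeds $q^2+q+1$ forced repetitions cap the growth.

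The main obstacle is the effective smoothness count. The hard part will be establishing the independence estimate above with a \emph{uniform, explicit} error term — that is, proving that passing through a bounded prescribed point set is asymptotically independent of smoothness, with error controlled by the Poonen--Bucur--Kedlaya sieve uniformly over all admissible configurations $T$. Carefully handling the medium-degree and high-degree ranges of the sieve, together with the contribution of singular points and points at infinity, is exactly what forces the $\lfloor d/p\rfloor$-type dependence and is the delicate technical heart of the argument.
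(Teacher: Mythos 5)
Your sketch correctly identifies the overall architecture of the argument, but note that the paper under review does not prove this statement at all: it is quoted verbatim from Bucur--David--Feigon--Lal\'in \cite{BDFL10}*{Theorem 1.3}, so the relevant benchmark is their proof, which indeed proceeds as you outline (expand the centered $k$-th moment over $k$-tuples of points, estimate the number of smooth curves through a prescribed point set by an effective closed-point sieve, and compare with the random model).

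However, there is a genuine error at the central step of your proposal, and it is exactly the subtle point of the theorem. You claim that for a set $T$ of $s$ distinct points the smooth-curve average of $\prod_{P\in T} c_F(P)$ is $q^{-s}(1+\text{error})$, on the grounds that the Poonen densities of the ambient space and of the codimension-$s$ subspace ``cancel to leading order, leaving the factor $q^{-s}$,'' and you then match this against $\bigl(\frac{q+1}{q^2+q+1}\bigr)^s$ using $\frac{q+1}{q^2+q+1}=q^{-1}\bigl(1+O(q^{-2})\bigr)$. Both halves are wrong. Conditioning on smoothness changes the per-point probability: at a point $P\in T$ the local condition on the $1$-jet of $F$ at $P$ is $F(P)=0$ together with $\nabla F(P)\neq 0$, i.e.\ $q^2-1$ admissible jets out of $q^3$, whereas at a point outside $T$ the condition is merely ``not singular at $P$,'' i.e.\ $q^3-1$ jets out of $q^3$; so the sieve yields per-point density exactly $\frac{q^2-1}{q^3-1}=\frac{q+1}{q^2+q+1}$, not $\frac{1}{q}$, with errors coming only from the sieve truncation. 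This exact value is precisely why the random model in the statement uses Bernoulli$\bigl(\frac{q+1}{q^2+q+1}\bigr)$ variables, in contrast with the Bernoulli$(1/q)$ variables that govern \emph{all} curves (Proposition~\ref{prop:distribution} of this paper). Moreover, the discrepancy cannot be swept into the error factor as you suggest, because the centered moment is produced by massive cancellation. Already for $k=1$ the model moment $\mathbb{E}\bigl[\sum_i X_i-(q+1)\bigr]$ vanishes identically, so the theorem's purely multiplicative error forces the smooth-curve mean to be $q+1$ up to a multiplicative factor; your version would instead give mean $\frac{q^2+q+1}{q}=q+1+\frac{1}{q}$ plus sieve errors, an additive discrepancy that no multiplicative bound around $0$ can accommodate. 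For general $k$, relative perturbations of size $\sim q^{-2}$ per point enter the tuple expansion with multiplicity up to $(q^2+q+1)^k$, while the main term after cancellation is only $O\bigl((q+1)^{k/2}\bigr)$ before normalization, so the perturbation is not dominated by the stated error term uniformly in $k$. The correct proof must carry the exact local factor $\frac{q^2-1}{q^3-1}$ through the sieve so that the random model is matched identically at the main-term level, leaving only the truncation errors (the $d^{-1/3}$ and characteristic-$p$ terms) in the final estimate.
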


The following corollary will be helpful in the proof of Theorem~\ref{thm:most-smooth-curves-are-not-blocking}.

\begin{cor}\label{second_moment}
When $q$ and $d$ tend to infinity with $d\geq 3p$, we have $M_2(q,d)=o(q^{1/3})$. 
\end{cor}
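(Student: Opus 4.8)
The plan is to apply Theorem~\ref{moment} with $k=2$ and show that \emph{both} the main factor and the error factor are $O(1)$; since $q^{1/3}\to\infty$, this is far more than the claimed $o(q^{1/3})$. First I would evaluate the main term. As the $X_i$ are i.i.d.\ Bernoulli with mean $\mu=(q+1)/(q^2+q+1)$, the sum $\sum_{i=1}^{q^2+q+1}X_i$ has mean $q+1$ and variance $(q^2+q+1)\mu(1-\mu)=(q+1)\cdot\frac{q^2}{q^2+q+1}$. Hence the expectation appearing in Theorem~\ref{moment} is
\[
\mathbb{E}\left(\left(\frac{1}{\sqrt{q+1}}\left(\sum_{i=1}^{q^2+q+1}X_i-(q+1)\right)\right)^2\right)=\frac{q^2}{q^2+q+1}<1,
\]
which is $O(1)$ (indeed it tends to $1$).

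Next, since $\min(2,q^2+q+1)=2$, the error prefactor is $q^2$, and multiplying it into the bracket produces the three quantities
\[
T_1=d^{-1/3},\qquad T_2=(d-1)^2q^{2-n},\qquad T_3=d\,q^{1-\lfloor(d-1)/p\rfloor},
\]
where $n=\min(\lfloor d/p\rfloor+1,\,d/3)$, and I would bound each by $O(1)$. Clearly $T_1\to0$. For $T_3$, the hypothesis $d\geq 3p$ gives $m:=\lfloor(d-1)/p\rfloor\geq 2$ together with $d\leq(m+1)p\leq(m+1)q$, whence $T_3\leq(m+1)q^{2-m}$, which is bounded (largest at $m=2$) for $m\geq2$, $q\geq2$.

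The main obstacle is $T_2$, whose exponent $n$ behaves differently according to $p$, so I would split into cases. When $p\geq5$ and $d\geq 3p$ one checks $d/3-d/p=d(p-3)/(3p)\geq p-3\geq2$, so $\lfloor d/p\rfloor+1\leq d/3$ and thus $n=\lfloor d/p\rfloor+1$; writing $m'=\lfloor d/p\rfloor\geq3$ and using $d\leq(m'+1)p\leq(m'+1)q$ gives $T_2\leq(m'+1)^2q^{3-m'}=O(1)$ (again bounded for $m'\geq3$, $q\geq2$). When $p\in\{2,3\}$ one has $n=d/3$, so that, using $q\geq2$ and $d\geq 3p\geq6$,
\[
T_2=(d-1)^2q^{2-d/3}\leq 4(d-1)^2\,2^{-d/3}\longrightarrow 0
\]
as $d\to\infty$, uniformly in $q$. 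In all cases $T_2=O(1)$. The delicate point throughout is the joint limit $d,q\to\infty$: when $p$ is large and $d$ is close to $3p$, the exponent $n$ stays bounded (equal to $4$), so one cannot rely on $q^{-n}$ being small and must instead trade the factor $(d-1)^2\approx 9p^2\leq9q^2$ against $q^{2-4}=q^{-2}$.

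Combining the three estimates, the error factor is $1+O(T_1+T_2+T_3)=O(1)$, so
\[
M_2(q,d)=\frac{q^2}{q^2+q+1}\cdot O(1)=O(1)=o(q^{1/3}),
\]
which completes the argument.
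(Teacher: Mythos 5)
Your proof is correct, and it in fact establishes something stronger than the stated claim: $M_2(q,d)=O(1)$, not merely $o(q^{1/3})$. The skeleton necessarily coincides with the paper's (apply Theorem~\ref{moment} with $k=2$; the main factor is $q^2/(q^2+q+1)=1+o(1)$; then control the three error terms), but your error analysis takes a genuinely different route. The paper splits at $p\le 5000$ versus $p>5000$ and, for large $p$, runs a logarithmic computation (bounding $\log_q d\le 1+\frac{\log(d/p)}{\log 5000}$ and verifying two numerical inequalities) to prove only the weaker estimate \eqref{d>Cp}, i.e.\ that the error terms are $o(q^{1/3})$; that factor of $q^{1/3}$ slack is genuinely used there, because the error bracket is \emph{not} $o(1)$ in general --- in your notation, at $q=p$ and $d=3p$ one has $T_2=(3p-1)^2q^{-2}\approx 9$ --- so the paper's target of a vanishing quantity is only reachable after dividing by $q^{1/3}$. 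You instead trade powers of $d$ for powers of $q$ via $d\le(\lfloor d/p\rfloor+1)p\le(\lfloor d/p\rfloor+1)q$, split at $p\in\{2,3\}$ versus $p\ge 5$ (where you correctly identify $\min(\lfloor d/p\rfloor+1,d/3)=\lfloor d/p\rfloor+1$), and obtain the uniform numerical bounds $T_2\le 16$ and $T_3\le 3$. This is more elementary --- no logarithms, no threshold constant $5000$ --- and it shows the error factor in Theorem~\ref{moment} is uniformly bounded rather than just $o(q^{1/3})$, which is exactly the right way to handle the delicate regime you single out ($p$ large, $d$ near $3p$, where the exponent stays at $4$). What the paper's route buys is mainly the reusability of the intermediate estimate \eqref{d>Cp}, which is quoted verbatim again in the proof of Corollary~\ref{cor: smooth}; your bounds would serve that purpose equally well, and your sharper conclusion $M_2(q,d)=O(1)$ would even slightly strengthen the Chebyshev step in the proof of Theorem~\ref{thm:most-smooth-curves-are-not-blocking}, although the weaker $o(q^{1/3})$ suffices there.
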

\begin{proof}
Since $X_i$'s are i.i.d. random variables, it is straightforward to compute that,
$$
{\mathbb{E}} \left( \left( \frac{1}{\sqrt{q+1}} \left( \sum_{i=1}^{q^2+q+1} X_i - (q+1) \right) \right)^2 \right)=1+o(1)
$$
as $q \to \infty$. Thus, in view of Theorem~\ref{moment}, it suffices to show that
\begin{equation}\label{d>Cp}
(d-1)^2 q^{\frac{5}{3}-\min\left(\left\lfloor \frac{d}{p}\right\rfloor+1, \frac{d}{3} \right)} + d q^{\frac{5}{3}-\left\lfloor\frac{d-1}{p}\right\rfloor-1}=o(1)
\end{equation}
when $q,d \to \infty$ and $d \geq 3p$.

When $p \leq 5000$ and $q \to \infty$, we have $q/p \to \infty$; thus, under the assumption that $d \to \infty$,
$$
(d-1)^2 q^{\frac{5}{3}-\min\left(\left\lfloor \frac{d}{p}\right\rfloor+1, \frac{d}{3} \right)} + d q^{\frac{5}{3}-\left\lfloor\frac{d-1}{p}\right\rfloor-1}=o(1).
$$

Next we assume $p>5000$ so that 
$\min\left(\left\lfloor \frac{d}{p}\right\rfloor+1, \frac{d}{3} \right)=\left \lfloor \frac{d}{p}\right\rfloor+1 \geq 4$ when $d \geq 3p$. Note that
$$
\log_q d \leq \frac{\log d}{\log p}= 1+ \frac{\log d/p}{\log p} \leq 1+\frac{\log d/p}{\log 5000}.
$$
When $x \geq 4$, it can be checked that,
$$
(\lfloor x \rfloor+1)-\frac{11}{3}-2\cdot \frac{\log x}{\log 5000}\geq x-\frac{11}{3}-2\cdot \frac{\log x}{\log 5000}>0.007.
$$
When $3 \leq x<4$, we have $\lfloor x \rfloor+1=4$ and again,
$$
(\lfloor x \rfloor+1)-\frac{11}{3}-2\cdot \frac{\log x}{\log 5000}=\frac{1}{3}-2\cdot \frac{\log x}{\log 5000}>0.007.
$$
Thus, it follows that
$$
\frac{5}{3}+2\log_q d-\left(\left\lfloor \frac{d}{p}\right\rfloor+1 \right)\leq \frac{11}{3}+2 \cdot \frac{\log d/p}{\log 5000}-\left(\left\lfloor \frac{d}{p}\right\rfloor+1 \right)<-0.007
$$
provided that $d \geq 3p$, which implies that 
$$
d^2q^{\frac{5}{3}-\left(\left\lfloor \frac{d}{p}\right\rfloor+1 \right)}=q^{2\log_q d+\frac{5}{3}-\left(\left\lfloor \frac{d}{p}\right\rfloor+1 \right)} \leq q^{-0.007}=o(1)
$$
when $q \to \infty$. 
Similarly, when $x \geq 3$, it is easy to verify that
$$
x-\frac{8}{3}-\frac{\log x}{\log 5000}>0.2.
$$
It follows that 
$$
dq^{\frac{5}{3}-d/p}=q^{\log_q d+\frac{5}{3}-d/p}<q^{-0.2}=o(1).
$$
Combining the ingredients above, we obtain
$$
(d-1)^2 q^{\frac{5}{3}-\min\left(\left\lfloor \frac{d}{p}\right\rfloor+1, \frac{d}{3} \right)} + d q^{\frac{5}{3}-\left\lfloor\frac{d-1}{p}\right\rfloor-1}
 \leq  d^2q^{\frac{5}{3}-\left(\left\lfloor \frac{d}{p}\right\rfloor+1 \right)}+dq^{\frac{5}{3}-d/p}=o(1)
$$
and the conclusion follows.
\end{proof}

\begin{rem}\label{rem: CLT}
By adapting the proof of the above corollary, we can show that if $q$ and $d$ tend to infinity such that $d/p \to \infty$, then as $C_F$ runs over smooth plane curves with degree $d$ over $\F_q$, the limiting distribution of 
$$
\frac{\# C_F(\F_q) - (q+1)}{\sqrt{q+1}}
$$
is the standard normal distribution (which is a stronger version of \cite{BDFL10}*{Corollary 1.4} whose hypothesis $d>q^{1+\epsilon}$ is more stringent). Under this stronger assumption, we can show that for each fixed positive integer $k$, the error term in Theorem~\ref{moment} is $o(1)$. The desired conclusion then follows from the triangular central limit theorem and the method of moments from probability theory; we refer to \cite{B95}*{Theorem 27.3} and \cite{BDFL10}*{Remark 1.5} for related discussions.
\end{rem}

\section{Linear independence, point-counting, and skew lines}\label{sect:skew-lines}

The purpose of this section is to apply a linear independence argument in the parameter space of plane curves to obtain results about point-counting and skew lines. The essential Proposition~\ref{prop:independence} will be used again in later sections. 

\subsection{Linear independence}
In algebraic geometry, we are often interested in understanding the space of degree $d$ hypersurfaces passing through a specified set of points with prescribed multiplicities. This is known as the \emph{interpolation problem}. The main difficulty is that when there are too many points relative to the degree $d$, vanishing at several points may no longer yield independent conditions. However, when the degree $d$ is large, the following proposition guarantees that different points impose linearly independent conditions on the parameter space. While the result is known to the experts (see \cite{P04}*{Lemma 2.1} where the argument is written in a cohomological language), our proof is self-contained and does not need advanced tools. 

\begin{prop}\label{prop:independence}
Fix a finite field $\mathbb{F}_q$, and consider any $k$ distinct  $\F_q$-points $P_1, P_2, \ldots, P_k$ in $\mathbb{P}^2$. If $d\geq \min\{k-1,2q-1\}$, then passing through $P_1, P_2, \ldots, P_k$ impose linearly independent conditions in the vector space of degree $d$ plane curves. 
\end{prop}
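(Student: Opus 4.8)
The plan is to reformulate the statement in dual terms. Fix representatives $\tilde P_1, \ldots, \tilde P_k \in \mathbb{F}_q^3 \setminus \{0\}$ of the points and let $\ell_i \colon S_d \to \mathbb{F}_q$ be the evaluation functional $\ell_i(F) = F(\tilde P_i)$. Saying that passing through the $P_i$ imposes linearly independent conditions is exactly saying the $\ell_i$ are linearly independent, equivalently that the evaluation map $S_d \to \mathbb{F}_q^k$, $F \mapsto (\ell_1(F),\ldots,\ell_k(F))$, is surjective. A standard argument reduces this to the following separation property: for each index $i$ there exists $F_i \in S_d$ with $F_i(\tilde P_i) \neq 0$ and $F_i(\tilde P_j) = 0$ for all $j \neq i$. (Given such forms, if $\sum_i c_i \ell_i = 0$ then evaluating on $F_i$ forces $c_i = 0$.) Thus everything comes down to constructing, for a single marked point $P := P_i$, a degree-$d$ form vanishing at the remaining $P_j$ but not at $P$.

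In the regime where $\min\{k-1,2q-1\} = k-1$, i.e.\ $k \le 2q$ and $d \ge k-1$, I would use naive interpolation. For each $j \ne i$, the distinctness $P \ne P_j$ guarantees an $\mathbb{F}_q$-linear form $L_j$ with $L_j(\tilde P_j)=0$ and $L_j(\tilde P)\ne 0$ (the forms vanishing at $P_j$ form a $2$-dimensional space, and those also vanishing at $P$ a $1$-dimensional one). Then $\prod_{j\ne i} L_j$ has degree $k-1$, vanishes at every $P_j$ with $j\ne i$, and is nonzero at $P$; multiplying by $L^{d-(k-1)}$ for any linear form $L$ with $L(\tilde P)\ne 0$ raises the degree to $d$ without destroying either property.

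The substance is the complementary regime $d \ge 2q-1$, where $k$ may be as large as $q^2+q+1$ and the product of $k-1$ lines has too high a degree. Here I would construct one form that vanishes at \emph{every} $\mathbb{F}_q$-point of $\mathbb{P}^2$ except $P$; this automatically kills all the $P_j$ with $j\ne i$, whatever they are. Since $\mathrm{PGL}_3(\mathbb{F}_q)$ acts transitively on $\mathbb{P}^2(\mathbb{F}_q)$ and a linear change of coordinates preserves degree and the vanishing pattern, I may assume $P = [0:0:1]$. I claim
$$
F \;=\; Z\,(X^{q-1}-Z^{q-1})(Y^{q-1}-Z^{q-1})
$$
does the job. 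It has degree $2q-1$. The factor $Z$ kills all $q+1$ points at infinity $[X:Y:0]$. On the affine chart $Z=1$ the form restricts to $(x^{q-1}-1)(y^{q-1}-1)$, which vanishes at every $(x,y)\in\mathbb{F}_q^2$ with $(x,y)\ne(0,0)$ — if $x\ne0$ then $x^{q-1}=1$, otherwise $y\ne 0$ and $y^{q-1}=1$ — while at the origin it equals $(-1)(-1)=1\ne 0$, so $F(\tilde P)\ne 0$. Finally, for $d>2q-1$ I multiply by $Z^{d-(2q-1)}$, which preserves nonvanishing at $P$ and creates no new nonzero values elsewhere since $F$ already vanishes there.

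Combining the two regimes produces a separating form $F_i$ of degree exactly $d$ for every $i$ whenever $d \ge \min\{k-1,2q-1\}$, which establishes surjectivity of the evaluation map and hence the proposition. The only genuine obstacle is exhibiting the explicit degree-$(2q-1)$ form in the second regime; the key insight is that one should \emph{not} try to vanish only at the chosen $k-1$ points but rather at all of $\mathbb{P}^2(\mathbb{F}_q)\setminus\{P\}$ at once, after which the factorization $Z\,(X^{q-1}-Z^{q-1})(Y^{q-1}-Z^{q-1})$ presents itself from the identities $t^{q-1}=1$ on $\mathbb{F}_q^\times$ and $Z=0$ on the line at infinity.
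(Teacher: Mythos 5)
Your proof is correct and follows essentially the same route as the paper's: both reduce the statement to exhibiting, for each marked point, a degree-$d$ form vanishing at the remaining points but not at that point, constructed via a product of linear forms in the regime $d \geq k-1$ and via the Homma--Kim-type polynomial in the regime $d \geq 2q-1$ (the paper uses $x^{d-(2q-2)}(y^{q-1}-x^{q-1})(z^{q-1}-x^{q-1})$, which vanishes on $\mathbb{P}^2(\mathbb{F}_q)\setminus\{[1:0:0]\}$ and is your form up to a coordinate change and padding). The only difference is cosmetic packaging of the linear algebra: you phrase it with dual evaluation functionals and separating forms, whereas the paper runs a nested-subspace dimension count showing the dimension drops by exactly one as each point is added.
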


\begin{proof}
Let $V$ denote the vector space of all plane curves of degree $d$. Note that $\F_q$-points of $V$ exactly correspond to plane curves of degree $d$ defined over $\F_q$. It is known that $\dim(V)=N=\binom{d+2}{2}$. For each $1\leq i\leq k$, let 
$$
W_i = \{\text{plane curves of degree } d \text{ passing through } P_1, P_2, \ldots, P_i\}.
$$
We aim to show that $W_k$ has the expected dimension as an $\F_q$-vector space, namely $\dim(W_k) = \dim(V)-k$. It is clear that $W_i\supseteq W_{i+1}$ for each $1\leq i\leq k-1$. In particular, we have:
$$
\dim(W_1) \geq \dim(W_2) \geq \dim(W_3) \geq \cdots \geq \dim(W_{k}). 
$$
Moreover, passing through one additional point imposes \emph{at most} one linear condition, so we have either $\dim(W_{i+1})=\dim(W_i)-1$ or $\dim(W_{i+1})=\dim(W_{i})$ for each $i$. Since $\dim(W_1)=\dim(V)-1$, we must have $\dim(W_k) \geq \dim(V)-k$. 

In order to show that $\dim(W_k)=\dim(V)-k$, it suffices to show that the dimension drops by $1$ at each step; that is, we want to show that $\dim(W_{i+1})=\dim(W_i)-1$ holds for each $1\leq i\leq k-1$. To prove this, it suffices to find an element $f\in W_i$ such that $f\notin W_{i+1}$. 

We proceed to show that for $d\geq \min\{k-1,2q-1\}$, we can find a plane curve $C=\{f=0\}$ of degree $d$ defined over $\F_q$ such that $f\in W_{i}$ but $f\notin W_{i+1}$, that is, $f$ passes through $P_1, P_2, \ldots, P_{i}$ but does \emph{not} pass through $P_{i+1}$. 

First, assume that $d \geq k-1$. For each $j\in \{1, 2, \ldots, i\}$, let $L_j$ be an $\F_q$-line which passes through $P_j$ but not $P_{i+1}$. Then for any $d\geq k-1$, we can consider the reducible curve $C$ consisting of the lines $L_j$ (for $j=1, 2, \ldots, i$). By repeating some of the lines, we can ensure that $C$ has degree $d$. This is because $d\geq k-1$ and the number of lines $L_j$ (for $j=1, 2, \ldots, i$) is at most $i\leq k-1$. By construction, $C$ has degree $d$, $C$ passes through $P_1, P_2, \ldots, P_{i}$ but $C$ does not pass through $P_{i+1}$. 

Next, assume that $d \geq 2q-1$. In this case, we can instead use a construction due to Homma and Kim  (a special case of \cite{HK18}*{Corollary 2.4}).
We can find a polynomial $f$ with degree $d$ that vanishes on all points of $\bP^2(\F_q)$ except for $P_{i+1}$ (in particular, $C=\{f=0\}$ passes through $P_1, P_2, \ldots, P_{i}$ but $C$ does not pass through $P_{i+1}$) . In fact, we can explicitly construct such a polynomial $f$ by noticing that the polynomial
$$
x^{d-(2q-2)}(y^{q-1}-x^{q-1})(z^{q-1}-x^{q-1} )
$$
vanishes on $\bP^2(\F_q)\setminus \{[1:0:0]\}$. 

The proof of the strict inclusion $W_{i}\supsetneq W_{i+1}$ is complete, and the desired result follows. 
\end{proof}

\begin{rem}
When $k$ is close to $q^2+q+1$, the degree $2q-1$ in the statement of Proposition~\ref{prop:independence} is optimal. The paper by Homma and Kim~\cite{HK18}*{Section 3} contains a detailed discussion.
\end{rem}

When $d \geq q^2+q$, the following proposition \cite{BDFL10}*{Proposition 1.6} describes the explicit distribution of the number of rational points of degree $d$ plane curves over $\F_q$, which consequently implies that the limiting distribution of the number of rational points (up to normalization) is the standard normal distribution (compare to Theorem~\ref{moment} and Remark~\ref{rem: CLT}). The original proof of \cite{BDFL10}*{Proposition 1.6} relied on the more general interpolation result developed by Poonen~\cite{P04}*{Lemma 2.1}. Lercier et.~al. have observed that the construction due to Homma and Kim allows one to weaken the condition $d \geq q^2+q$ to $d \geq 2q-1$ in \cite{LRRSS19}*{Proposition 3.1}. Let us explain how Proposition~\ref{prop:independence} leads to a quick proof of this useful result. We present a complete proof below because we will refer to it in later discussions. 

\begin{prop}\label{prop:distribution}
Let $Y_1, \dots, Y_{q^2+q+1}$ be i.i.d. random variables taking the value $1$ with probability
$1/q$ and the value $0$ with probability $(q-1)/q$.
Then, for each integer $0 \leq t \leq q^2+q+1$ and $d \geq 2q-1$, we have 
$$\frac{\# \{ F \in S_d : \#C_F(\F_q)=t \}}{\# S_d} =
\Pr\left( Y_1 + \dots + Y_{q^2+q+1} = t \right).$$
\end{prop}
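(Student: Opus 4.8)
The plan is to realize the point count $\#C_F(\F_q)$ as the number of vanishing coordinates of a single linear map, and then to exploit that a surjective $\F_q$-linear map pushes the uniform distribution forward to the uniform distribution. Enumerate the points of $\bP^2(\F_q)$ as $P_1, \dots, P_{q^2+q+1}$ and fix once and for all a representative vector $\tilde P_j \in \F_q^3$ for each $P_j$. I would consider the evaluation map
$$
\mathrm{ev}\colon S_d \longrightarrow \F_q^{q^2+q+1}, \qquad F \longmapsto \bigl(F(\tilde P_1), \dots, F(\tilde P_{q^2+q+1})\bigr).
$$
This is $\F_q$-linear, and since whether $F(\tilde P_j)=0$ is independent of the chosen representative, the curve $C_F$ passes through $P_j$ exactly when the $j$-th coordinate of $\mathrm{ev}(F)$ vanishes. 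Hence $\#C_F(\F_q)$ equals the number of zero coordinates of $\mathrm{ev}(F)$, and the whole statement reduces to understanding the distribution of the zero pattern of $\mathrm{ev}(F)$ as $F$ ranges uniformly over $S_d$.

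The key input is that $\mathrm{ev}$ is surjective. The condition ``$C_F$ passes through $P_j$'' is precisely the vanishing of the linear functional $F \mapsto F(\tilde P_j)$, so Proposition~\ref{prop:independence} tells us that these $q^2+q+1$ functionals are linearly independent in $S_d^*$ as soon as $d \geq \min\{(q^2+q+1)-1,\, 2q-1\} = 2q-1$ (using $q\geq 2$, so that $q^2+q \geq 2q-1$). Linear independence of the coordinate functionals is exactly the assertion that $\mathrm{ev}$ has full rank $q^2+q+1$, i.e.\ is surjective; note this is consistent with $\dim S_d = \binom{d+2}{2} \geq q^2+q+1$ for $d\geq 2q-1$, so no dimensional obstruction arises.

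Next I would invoke the elementary fact that a surjective $\F_q$-linear map $T\colon V \twoheadrightarrow W$ has all fibers of equal cardinality $|V|/|W|$, so the pushforward of the uniform distribution on $V$ is the uniform distribution on $W$. Applying this to $\mathrm{ev}$, the vector $\mathrm{ev}(F)$ is uniformly distributed over $\F_q^{q^2+q+1}$ when $F$ is chosen uniformly from $S_d$. Consequently its coordinates are independent and each is uniform on $\F_q$, so the indicator $Y_j$ of the event ``the $j$-th coordinate equals $0$'' is a Bernoulli variable with $\Pr(Y_j=1)=1/q$, and the $Y_j$ are independent. Since $\#C_F(\F_q) = \sum_{j} Y_j$, the proportion of $F\in S_d$ with $\#C_F(\F_q)=t$ equals $\Pr(Y_1 + \dots + Y_{q^2+q+1}=t)$, which is the claimed identity.

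The only genuine content is the surjectivity step, which is handed to us directly by Proposition~\ref{prop:independence}; everything else is bookkeeping. The one place deserving care is the translation between the combinatorial phrasing ``the points impose linearly independent conditions'' and the algebraic phrasing ``the coordinate functionals are linearly independent, hence $\mathrm{ev}$ is surjective,'' together with confirming that the hypothesis $d\geq 2q-1$ indeed forces independence for the \emph{entire} set of $q^2+q+1$ points rather than merely for some subset. I expect no real obstacle beyond verifying these identifications and the standard uniform-to-uniform pushforward.
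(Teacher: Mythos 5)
Your proof is correct. It shares the paper's key input --- Proposition~\ref{prop:independence} under the hypothesis $d \geq 2q-1$ --- but your passage from linear independence to the product distribution is genuinely different. The paper works directly with the indicator events $Z_i$ of passing through $P_i$: Proposition~\ref{prop:independence} gives $\Pr(Z_j = 1 \text{ for all } j \in J) = q^{-|J|}$ for every subset $J$ of the point set, and an inclusion--exclusion computation combined with the binomial theorem then identifies the joint law of the $Z_i$ with that of i.i.d.\ Bernoulli$(1/q)$ variables. You instead assemble the evaluation functionals $\ell_i \colon F \mapsto F(\tilde P_i)$ into a single linear map $\mathrm{ev}\colon S_d \to \F_q^{q^2+q+1}$, observe that Proposition~\ref{prop:independence} applied with $k = q^2+q+1$ (legitimate, since $\min\{q^2+q,\,2q-1\} = 2q-1$ for $q \geq 2$) is exactly the statement that the $\ell_i$ are linearly independent --- the codimension of $\bigcap_i \ker \ell_i$ equals the rank of the family $(\ell_i)$, so the paper's dimension count $\dim W_k = \dim V - k$ translates precisely into surjectivity of $\mathrm{ev}$ --- and then use that a surjective $\F_q$-linear map pushes the uniform measure forward to the uniform measure. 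This replaces the paper's combinatorial step by a one-line linear-algebra argument, and it yields slightly more than the proposition asks: the full vector $\bigl(F(\tilde P_1), \dots, F(\tilde P_{q^2+q+1})\bigr)$ is uniform on $\F_q^{q^2+q+1}$, not merely that its zero pattern has the product Bernoulli law. The only thing the paper's formulation buys in exchange is that its inclusion--exclusion template is reused essentially verbatim later (e.g.\ in Proposition~\ref{proportion-of-curves} and in the proof of Theorem~\ref{thm:lower-upper-bounds}), though your uniformity statement would serve those applications equally well.
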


\begin{proof}
Enumerate all the points of $\bP^2(\F_q)$ as $P_1,P_2,\ldots, P_{q^2+q+1}$. Let $C$ be a random plane curve with degree $d$. We define the random variables $Z_1, Z_2,\ldots, Z_{q^2+q+1}$ as follows: if $C$ passes through $P_i$, assign $Z_i$ to be $1$ and otherwise $0$.  It suffices to show that for any $\epsilon_1, \epsilon_2,\ldots,\epsilon_{q^2+q+1} \in \{0,1\}$, we have
$$
\Pr(Z_i=\epsilon_i \text{ for all } i \leq q^2+q+1)= \Pr(Y_i=\epsilon_i \text{ for all } i \leq q^2+q+1)=\bigg (\frac{1}{q} \bigg)^{|I|} \bigg (\frac{q-1}{q} \bigg)^{q^2+q+1-|I|},
$$
where $I=\{i: \epsilon_i=1\}$. Since $d \geq 2q-1$, Proposition~\ref{prop:independence} implies that for any $J \subset \{1,2,\ldots, q^2+q+1\}$, 
$$
\Pr(Z_j=1 \text{ for all } j \in J)=\bigg (\frac{1}{q} \bigg)^{|J|}.
$$
It follows from the principle of inclusion and exclusion, and the binomial theorem that
\begin{align*}
&\Pr(Z_i=\epsilon_i \text{ for all } i \leq q^2+q+1)= \sum_{J \supset I} (-1)^{|J|-|I|} \Pr(Z_j=1 \text{ for all } j \in J)\\
&=\sum_{J \supset I} (-1)^{|J|-|I|} \bigg (\frac{1}{q} \bigg)^{|J|}
=\sum_{k=|I|}^{q^2+q+1} \binom{q^2+q+1-|I|}{k-|I|} (-1)^{k-|I|}\bigg (\frac{1}{q} \bigg)^{k}\\
&=\bigg (\frac{1}{q} \bigg)^{|I|} \sum_{k=0}^{q^2+q+1-|I|} \binom{q^2+q+1-|I|}{k} \bigg (-\frac{1}{q} \bigg)^{k}
=\bigg (\frac{1}{q} \bigg)^{|I|} \bigg (\frac{q-1}{q} \bigg)^{q^2+q+1-|I|}
\end{align*}
leading to the desired result. \end{proof}

\subsection{Expected number of skew lines}\label{subsect:poisson} In this subsection, we build machinery to calculate the number of skew lines to a random plane curve from a probabilistic perspective (see Corollary~\ref{cor:poisson}).

\begin{thm}\label{thm:poisson}
Suppose $f$ is a function with $f(q)/q\to 1$ as $q\to\infty$. Let $P_1, P_2, \ldots, P_{f(q)}$ be distinct points in $\bP^2(\F_q)$. For $d\geq f(q)-1$, consider the space $S_d$ of degree $d$ plane curves defined over $\F_q$. Let $X_q$ be the random variable representing the number of $P_i$'s contained in a random plane curve $C\in S_d$. As $q\to\infty$, the limiting distribution of $X_q$ is Poisson with mean $\lambda=1$. 
\end{thm}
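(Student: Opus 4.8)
The plan is to show that $X_q$ is \emph{exactly} a binomial random variable and then invoke the classical Poisson limit theorem. The whole theorem reduces to extracting an independence statement from Proposition~\ref{prop:independence}, after which everything is standard.

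First I would mirror the argument in the proof of Proposition~\ref{prop:distribution}. Over the uniform choice of $C\in S_d$, define indicator random variables $Z_1,\ldots,Z_{f(q)}$, where $Z_i=1$ if $C$ passes through $P_i$ and $Z_i=0$ otherwise, so that $X_q=Z_1+\cdots+Z_{f(q)}$. The crucial input is Proposition~\ref{prop:independence}: for any subset $J\subseteq\{1,\ldots,f(q)\}$ we have $|J|\leq f(q)$ and hence $d\geq f(q)-1\geq |J|-1$, so passing through $\{P_j:j\in J\}$ imposes $|J|$ linearly independent conditions on the $N$-dimensional space $S_d$. Since each single condition cuts the dimension by one, this gives $\Pr(Z_j=1\text{ for all }j\in J)=q^{-|J|}$.

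From here the inclusion–exclusion computation carried out verbatim in Proposition~\ref{prop:distribution} shows that for every assignment $(\epsilon_1,\ldots,\epsilon_{f(q)})\in\{0,1\}^{f(q)}$ with $I=\{i:\epsilon_i=1\}$,
$$
\Pr\!\big(Z_i=\epsilon_i \text{ for all } i\big)=\left(\tfrac{1}{q}\right)^{|I|}\left(\tfrac{q-1}{q}\right)^{f(q)-|I|}.
$$
In other words the $Z_i$ are jointly independent $\mathrm{Bernoulli}(1/q)$ variables, so $X_q$ has the $\mathrm{Binomial}(f(q),1/q)$ distribution exactly, for every admissible $d$ and not merely in the limit.

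Finally, I would appeal to the law of rare events. Since $q\to\infty$ and $f(q)/q\to 1$, we have $f(q)\to\infty$, the success probability $1/q\to 0$, and the mean $f(q)\cdot(1/q)=f(q)/q\to 1$; the standard Poisson limit theorem for binomials with $np\to\lambda$ then yields convergence of $X_q$ in distribution to $\mathrm{Poisson}(1)$. I do not anticipate a genuine obstacle here: the only point requiring care is the hypothesis $d\geq f(q)-1$, which is exactly what guarantees that subsets of \emph{all} sizes up to $f(q)$ impose independent conditions and hence that the $Z_i$ are jointly (not merely pairwise) independent. Should one prefer to avoid invoking the full joint-independence statement, an alternative is the method of factorial moments: the point-count probabilities above give $\mathbb{E}\big[X_q(X_q-1)\cdots(X_q-k+1)\big]=f(q)(f(q)-1)\cdots(f(q)-k+1)\,q^{-k}\to 1=\lambda^k$ for each fixed $k$, and since the Poisson law is determined by its moments this again forces convergence to $\mathrm{Poisson}(1)$.
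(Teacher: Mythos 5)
Your proposal is correct and matches the paper's own proof: the paper likewise applies Proposition~\ref{prop:independence} (as in the proof of Proposition~\ref{prop:distribution}) to conclude that $X_q$ is exactly $\operatorname{Bin}(f(q),1/q)$, and then invokes the Poisson limit theorem using $\mathbb{E}(X_q)=f(q)/q\to 1$. Your added remark on factorial moments is a fine alternative but not needed.
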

\begin{proof}
We can apply Proposition~\ref{prop:independence}, similar to the proof of Proposition~\ref{prop:distribution}, to deduce that $X_q$ has a binomial distribution $\operatorname{Bin}(f(q), 1/q)$. The conclusion follows from Poisson limit theorem~\cite{B95}*{Theorem 23.2} since $$\lim_{q\to\infty} \mathbb{E}(X_q)= \lim_{q \to \infty} \frac{f(q)}{q}=1.$$
\end{proof}

As an application, we will discuss the following question: how many roots in $\F_q$ does a random polynomial in $\F_q[x]$ have? Leont'ev \cite{L06} proved the following result which shows that the Poisson distribution controls the number of distinct roots. The original proof is somewhat involved and uses probability-generating functions and some tools from complex analysis. We refer to \cite{LL21} for a different proof of this fact using tools from number theory and generating functions. The proof given below is much shorter.

\begin{cor}[Leont'ev]
As $q \to \infty$, the number of distinct $\F_q$-roots of a random polynomial $f \in \F_q[x]$ with a degree at most $q-1$, tends to the Poisson distribution with mean $\lambda=1$.
\end{cor}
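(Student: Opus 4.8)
The plan is to reduce this statement about roots of a single-variable polynomial to the already-proven Theorem~\ref{thm:poisson} by finding the right dictionary between polynomials in $\F_q[x]$ of degree at most $q-1$ and points lying on a plane curve. The key observation is that a polynomial $f(x) = a_0 + a_1 x + \cdots + a_{q-1}x^{q-1} \in \F_q[x]$ of degree at most $q-1$ is naturally parametrized by its coefficient vector $(a_0, a_1, \ldots, a_{q-1}) \in \F_q^q$, and there are $q$ distinct values $x = \alpha \in \F_q$ at which $f$ can vanish. First I would set up the homogenization: interpret $f$ as the restriction to the affine line of a homogeneous form, or more directly, observe that ``$f(\alpha) = 0$'' for each fixed $\alpha \in \F_q$ is a single linear condition on the coefficient vector, and that these conditions are linearly independent precisely because the relevant $q \times q$ Vandermonde-type matrix (whose rows are $(1, \alpha, \alpha^2, \ldots, \alpha^{q-1})$ for the $q$ distinct values of $\alpha$) is invertible.

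The cleanest route, however, is to recognize that polynomials of degree at most $q-1$ in one variable correspond exactly to the parameter space $S_d$ of plane curves in a suitable guise, or to directly transplant the linear-independence engine. Concretely, I would take the $q$ distinct $\F_q$-points to be the evaluation points $\alpha \in \F_q$, and define, for a uniformly random $f$, the random variable $X_q$ counting how many of these $q$ values are roots of $f$. Choosing $f(q) = q$ (so that $f(q)/q \to 1$ trivially) and noting $d = q - 1 \geq f(q) - 1 = q - 1$, the hypotheses of Theorem~\ref{thm:poisson} are met once we verify the independence of the vanishing conditions. The essential point is that for any subset $J$ of the $q$ evaluation points, the probability that $f$ vanishes on all of $J$ equals $(1/q)^{|J|}$; this follows from the invertibility of the associated Vandermonde submatrix, which guarantees that imposing $|J| \leq q$ distinct root conditions cuts the dimension of the coefficient space by exactly $|J|$.

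Once the independence is in hand, the distribution of $X_q$ is binomial $\operatorname{Bin}(q, 1/q)$, and the Poisson limit theorem gives convergence to the Poisson distribution with mean $\lambda = 1$, exactly as in the proof of Theorem~\ref{thm:poisson}. In fact, the quickest presentation simply invokes Theorem~\ref{thm:poisson} as a black box after establishing the dictionary: the space of degree-at-most-$(q-1)$ polynomials plays the role of $S_d$, and the $q$ affine evaluation points play the role of the $P_i$. The main obstacle I anticipate is the bookkeeping around the precise analogue of Proposition~\ref{prop:independence} in the single-variable setting: Proposition~\ref{prop:independence} is phrased for plane curves and points in $\mathbb{P}^2$, whereas here the natural object is the Vandermonde matrix for the one-variable problem. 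Rather than reprove independence from scratch, I would either cite the Vandermonde determinant directly (the $q$ row vectors $(1, \alpha, \ldots, \alpha^{q-1})$ for distinct $\alpha$ are linearly independent, so the $q$ root-conditions are independent and any $k \leq q$ of them are too), or embed the $\F_q$-points as collinear points on an $\F_q$-line in $\mathbb{P}^2$ and quote Proposition~\ref{prop:independence} with $d = q-1 \geq \min\{k-1, 2q-1\}$ for $k \leq q$. Either way the verification is short, and the bulk of the work has already been done in establishing Theorem~\ref{thm:poisson}.
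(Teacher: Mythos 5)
Your proposal is correct, but it proves the corollary by a genuinely different route than the paper. The paper's proof stays inside its plane-curve machinery: it introduces the surjective linear map $\pi\colon S_{q-1}\to V_{q-1}$, $f(x,y,z)\mapsto f(x,1,0)$, notes that all fibers of $\pi$ have the same cardinality (so a uniform $f\in S_{q-1}$ induces a uniform polynomial $\pi(f)$ of degree at most $q-1$), identifies the distinct $\F_q$-roots of $\pi(f)$ with the $\F_q$-points of the curve $\{f=0\}$ on the affine line $\{[x:1:0]:x\in\F_q\}$, and then invokes Theorem~\ref{thm:poisson} with these $q$ points. Your main argument instead works entirely in the one-variable setting: for any $k\le q$ distinct $\alpha\in\F_q$, the conditions $f(\alpha)=0$ are linearly independent functionals on the coefficient space $\F_q^{q}$ (invertibility of the Vandermonde matrix), hence $\Pr\bigl(f \text{ vanishes on } J\bigr)=q^{-|J|}$ for every subset $J$ of evaluation points; by inclusion--exclusion the root indicators are i.i.d.\ Bernoulli$(1/q)$, the number of distinct roots is exactly $\operatorname{Bin}(q,1/q)$, and the Poisson limit theorem concludes. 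This is more elementary and self-contained --- it frees the corollary from Proposition~\ref{prop:independence} and Theorem~\ref{thm:poisson} entirely --- while the paper's reduction buys brevity by reusing machinery already built. One caution on your fallback option: merely embedding the evaluation points as collinear points of $\mathbb{P}^2$ and ``quoting Proposition~\ref{prop:independence}'' does not suffice on its own, since that proposition (and Theorem~\ref{thm:poisson}) concerns a uniformly random plane curve in $S_d$, not a uniformly random one-variable polynomial; bridging the two measures requires exactly the equal-fiber observation the paper makes for $\pi$ (or, equivalently, your Vandermonde computation). Since your primary argument establishes the needed independence directly, this is a presentational point, not a gap.
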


\begin{proof}
Let $V_{q-1}$ be the subspace of $\F_q[x]$ consisting of polynomials with a degree at most $q-1$. There is a natural surjective linear map $\pi\colon S_{q-1} \to V_{q-1}$ given by $f(x,y,z)\mapsto f(x,1,0)$. Since any two fibers of $\pi$ have the same size, as we pick $f$ uniformly randomly from $S_{q-1}$, the resulting image polynomials $\pi(f)$ also result in the uniform distribution on $V_{q-1}$. Moreover, the number of distinct $\F_q$-roots of $\pi(f)$ is equal to the number of $\F_q$-points in the intersection of the curve $\{f=0\}$ with the affine line $\{z=0\}-\{[1:0:0]\}=\{[x:1:0]: x \in \F_q\}\cong \mathbb{A}^1$. The desired result follows from invoking Theorem~\ref{thm:poisson} with the $q$ points on this affine line.
\end{proof}

The next corollary is the first step in estimating the number of skew lines to a random plane curve. 

\begin{cor}\label{cor:poisson}
Let $L$ be a fixed $\F_q$-line. For a random plane curve $C$ of degree $d\geq q$, the random variable $\# (C \cap L)(\F_q)$ tends to the Poisson distribution with mean $\lambda=1$. In particular, the expected number of skew lines to a random plane curve $C$ is $q^2/e+o(q^2)$.
\end{cor}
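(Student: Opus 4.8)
The plan is to recognize $\#(C\cap L)(\F_q)$ as a special case of the point-count random variable already analyzed in Theorem~\ref{thm:poisson}, and then to extract the expected number of skew lines from the underlying binomial law. First I would observe that a fixed $\F_q$-line $L$ contains exactly $q+1$ points of $\bP^2(\F_q)$, say $P_1, P_2, \ldots, P_{q+1}$, and that $\#(C\cap L)(\F_q)$ is precisely the number of these $q+1$ points that lie on $C$. Taking $f(q)=q+1$, we have $f(q)/q\to 1$, and the hypothesis $d\geq q$ is exactly the requirement $d\geq f(q)-1$ demanded by Theorem~\ref{thm:poisson}. Applying that theorem to the points $P_1, \ldots, P_{q+1}$ then immediately yields that $\#(C\cap L)(\F_q)$ converges in distribution to a Poisson random variable with mean $\lambda=1$.

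For the second assertion I would invoke the finer fact that underlies Theorem~\ref{thm:poisson}: via Proposition~\ref{prop:independence}, for every $d\geq q$ the variable $\#(C\cap L)(\F_q)$ is \emph{exactly} distributed as $\operatorname{Bin}(q+1, 1/q)$, not merely in the limit. Consequently the probability that $L$ is skew to $C$, namely that $\#(C\cap L)(\F_q)=0$, equals $(1-1/q)^{q+1}$, and by the same computation this value is identical for every $\F_q$-line. Since $\bP^2(\F_q)$ contains exactly $q^2+q+1$ lines, linearity of expectation gives that the expected number of skew lines equals
$$
(q^2+q+1)\left(1-\frac{1}{q}\right)^{q+1}.
$$
Combining $(1-1/q)^{q+1}\to e^{-1}$ with $q^2+q+1\sim q^2$ produces the claimed estimate $q^2/e+o(q^2)$.

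Since the corollary is essentially a matter of matching hypotheses to the machinery already built, I do not expect a serious obstacle; the two points requiring care are verifying that the $q+1$ points of $L$ satisfy the counting condition $d\geq f(q)-1$ of Theorem~\ref{thm:poisson} (which is exactly $d\geq q$), and recognizing that the skew-line count needs the \emph{exact} binomial law rather than the Poisson limit, so that multiplying a fixed probability by the growing number $q^2+q+1$ of lines is legitimate and yields the correct leading order. A sharper asymptotic expansion of $(1-1/q)^{q+1}$ would refine the leading term to the more precise constant recorded in Theorem~\ref{thm:expected-skew-lines}, but the corollary requires only this first-order estimate.
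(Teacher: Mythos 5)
Your proof is correct and follows essentially the same route as the paper: the first assertion is exactly an application of Theorem~\ref{thm:poisson} with $f(q)=q+1$ (so that $d\geq f(q)-1$ is precisely $d\geq q$), and the expectation follows by linearity over the $q^2+q+1$ lines. The only cosmetic difference is that you invoke the exact binomial probability $\left(1-\frac{1}{q}\right)^{q+1}$ per line (which is what the paper itself establishes in Proposition~\ref{prop:one-line} and uses for the refined Theorem~\ref{thm:expected-skew-lines}), whereas the paper's proof of this corollary is content with the Poisson-limit value $1/e+o(1)$; that weaker input already suffices because, by symmetry, this probability is identical for every $\F_q$-line, so the $o(1)$ term is uniform when summed over all lines.
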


\begin{proof}
The first assertion follows immediately from Theorem~\ref{thm:poisson}. In particular, for each line $L$, the probability that $(C \cap L)(\F_q)=\emptyset$ is $1/e+o(1)$. Since there are $q^2+q+1$ lines, the expected number of skew lines to $C$ is $(q^2+q+1) \cdot (1/e+o(1))=q^2/e+o(q^2)$ by the linearity of expectation. 
\end{proof}

In Theorem~\ref{thm:expected-skew-lines}, we will provide a refined estimate on the expected number of skew lines.

\begin{rem}\label{rmk:entin}
Corollary~\ref{cor:poisson} holds when $d$ is large compared to $q$. On the other hand, Entin~\cite{E21} obtained the same result on the number of skew lines when $q$ is large compared to $d$. Let $C\subset\bP^2$ be a geometrically irreducible projective plane curve of degree $d$. Consider the space $Y=(\bP^2)^{\ast}$ which parametrizes all lines in $\mathbb{P}^2$. We also have the variety $X$ given by:
$$
X = \{ (P, L) \ | \ P\in C\cap L \} \subset \bP^2\times (\bP^2)^{\ast}.
$$
Consider the map $f\colon X\to Y$ given by projection to the second coordinate. Note that $f$ is a finite map with $\deg(f)=d$. Suppose that the monodromy group $\operatorname{Mon}(X/Y)$ is equal to the symmetric group $\mathfrak{S}_d$. Having an $\F_q$-point in the fiber $f^{-1}(L)$ for a given $\F_q$-line $L$ is equivalent to a corresponding Frobenius class of the map $f$ at the point $L$ having a fixed point. Applying the Chebotarev density theorem for varieties over finite fields \cite{E21}*{Theorem 3} for each conjugacy class in $\mathfrak{S}_d$ with no fixed points and summing up the contributions, we obtain:
$$
 \frac{\#\{\text{skew }\F_q\text{-lines to } C\}}{q^2} \approx \frac{\# \text{permutations without fixed points}}{\# \mathfrak{S}_d} = \frac{!d}{d!} = \sum_{i=0}^{d} \frac{(-1)^{i}}{i!}
$$
for sufficiently large $q$. As $d$ gets larger and larger (but fixed), the proportion above will approach $1/e$. When $d$ is fixed, and $q\to\infty$, almost all plane curves are smooth (due to the Lang-Weil bound applied to the discriminant hypersurface) and, in particular, geometrically irreducible. Therefore, the assumption made above on the curve does not affect the density if we consider all plane curves of degree $d$. This limiting proportion of $1/e$ is in agreement with our result above. However, the ranges for $q$ and $d$ are different since we allow $d$ to be large with respect to $q$.

More generally, for a fixed $k\in\mathbb{N}$, the expected density of $\mathbb{F}_q$-lines meeting $C$ at exactly $k$ distinct $\mathbb{F}_q$-points is $1/(e\cdot k!)$. We can prove this in two different scenarios using similar arguments: (1) When $q$ is large with respect to $d$, we apply Entin's result and choose the conjugacy class with $k$ fixed points, and (2) When $d$ is large compared to $q$, we use the following well-known fact: the number of fixed points of a random permutation on $d$ letters converges to the Poisson distribution with mean $1$ as $d$ tends to infinity \cite{K07}*{Example 1.19}. 
\end{rem}

\section{Proportion of blocking plane curves}\label{sect:proportion-general-case}

In this section, we will analyze the asymptotic proportion of plane curves of degree $d$ over $\F_q$ that do not form a blocking set. One of the goals of this section is to explain that even a curve of large degree $d$ over a fixed finite field $\F_q$ is not blocking with a very high probability.

Let $\F_q$ be a finite field, and $S=\F_q[x,y,z]$. As we saw in Section~\ref{sect:prelim}, $S_d\subset S$ denotes the vector subspace of polynomials of degree $d\geq 1$ (including the $0$ polynomial). For any subset $A\subset S$, let $\mu_{d}(A) = \frac{\# (A\cap S_d)}{\# S_d}$. We can define the \emph{upper natural density} and \emph{lower natural density} respectively as follows,
$$
\overline{\mu}(A) = \limsup_{d\to\infty} \mu_d(A) = \limsup_{d\to\infty} \frac{\# (A\cap S_d)}{\# S_d}, \ \ \ \ \ \ \ \ \underline{\mu}(A) = \liminf_{d\to\infty} \mu_d(A) = \liminf_{d\to\infty} \frac{\# (A\cap S_d)}{\# S_d}.
$$
If the lower and upper densities agree, then we define the \emph{natural density} of $A$ by $\mu(A)=\overline{\mu}(A)=\underline{\mu}(A)$ and in this case, 
$$
 \mu(A) = \lim_{d\to\infty} \frac{\# (A\cap S_d)}{\# S_d}.
$$
Recall that $C_F$ denotes the plane curve defined by the equation $F=0$. Let $\mathcal{B}\subset S=\F_q[x,y,z]$ denote the set of homogeneous polynomials $F$ such that the curve $C_F$ is blocking. The primary goal of the present paper is to understand the natural density of $\mathcal{B}$. The complement of $\mathcal{B}$ is easier to work with, justifying the following.

\begin{defn}
Let $\F_q$ be a fixed finite field. Define $\mathcal{E}$ as follows:
$$
\mathcal{E} = \bigcup_{d\geq 1} \{ F\in S_d \ | \ C_F \text{ is not blocking} \}.
$$
\end{defn}

To gain insight into the natural density $\mu(\mathcal{E})$, we first stratify the space $\mathcal{E}$ by different $\F_q$-lines and understand the corresponding natural density $\mu(\mathcal{E}_L)$.

\begin{prop}\label{prop:one-line}
Let $L$ be a fixed $\F_q$-line in $\bP^2$. Let 
$$
\mathcal{E}_L = \bigcup_{d\geq 1} \{F\in S_d \ | \ C_F\cap L \text{ has no } \F_q\text{-points}\}.
$$
Then $\displaystyle \mu_d(\mathcal{E}_L)=\left(1-\frac{1}{q}\right)^{q+1}$ for $d\geq q$, which is approximately $\dfrac{1}{e}$ for $q$ large. 
\end{prop}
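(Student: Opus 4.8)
The plan is to reduce the statement to a counting problem on the $q+1$ rational points of $L$ and then run the same inclusion--exclusion argument used in the proof of Proposition~\ref{prop:distribution}, but restricted to a single line.

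First I would observe that $L$ contains exactly $q+1$ distinct $\F_q$-points, say $P_1, P_2, \ldots, P_{q+1}$, and that $C_F \cap L$ has no $\F_q$-point precisely when $C_F$ avoids every one of them. Thus $\mu_d(\mathcal{E}_L)$ equals the probability that a uniformly random $F \in S_d$ defines a curve missing all of $P_1, \ldots, P_{q+1}$, and computing this probability is the whole task. To that end, for a random $F \in S_d$ I introduce indicator variables $Z_i$ that equal $1$ when $C_F$ passes through $P_i$ and $0$ otherwise.

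The key input is Proposition~\ref{prop:independence}. Here we have $k = q+1$ points, and the hypothesis required is $d \geq \min\{k-1, 2q-1\} = \min\{q, 2q-1\} = q$, which is exactly the assumed range $d \geq q$. Hence passing through any subset of the $P_i$ imposes linearly independent conditions on $S_d$, so for every $J \subseteq \{1, \ldots, q+1\}$ the locus of $F$ vanishing at all $P_j$ with $j \in J$ is a subspace of codimension exactly $|J|$. Counting points in this subspace against all of $S_d$ gives
$$
\Pr\bigl(Z_j = 1 \text{ for all } j \in J\bigr) = \left(\frac{1}{q}\right)^{|J|}.
$$
Applying inclusion--exclusion to the event $\{Z_i = 0 \text{ for all } i\}$ and then the binomial theorem yields
$$
\mu_d(\mathcal{E}_L) = \sum_{J \subseteq \{1,\ldots,q+1\}} (-1)^{|J|} \left(\frac{1}{q}\right)^{|J|} = \sum_{k=0}^{q+1} \binom{q+1}{k}\left(-\frac{1}{q}\right)^{k} = \left(1 - \frac{1}{q}\right)^{q+1},
$$
and the approximation to $1/e$ follows from $(1-1/q)^{q+1} \to e^{-1}$ as $q \to \infty$.

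Since this is essentially a specialization of Proposition~\ref{prop:distribution} to the single line $L$, I do not anticipate a serious obstacle. The one point demanding care is the degree bookkeeping: I must confirm that the threshold $d \geq q$ (rather than the larger $d \geq 2q-1$) is exactly what Proposition~\ref{prop:independence} supplies for $k = q+1$ points, which is what allows the sharper hypothesis in this proposition.
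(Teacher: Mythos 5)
Your proposal is correct and follows essentially the same route as the paper: the paper likewise invokes Proposition~\ref{prop:independence} (noting that for the $q+1$ points of $L$ the threshold $\min\{k-1,2q-1\}=q$ gives the sharper hypothesis $d\geq q$) and then runs inclusion--exclusion plus the binomial theorem, the only cosmetic difference being that the paper counts curves meeting $L$ in at least one $\F_q$-point and subtracts from $1$, while you sum directly over the event of missing all points. Your degree bookkeeping and the codimension computation match the paper's argument exactly.
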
 

\begin{proof}
Let 
$$
\mathcal{A}_d = \{ F\in S_d \ | \ C_F\cap L \text{ has at least one } \F_q\text{-point}\}. 
$$
For each $\F_q$-point $P\in L$, define:
$$
\mathcal{A}_d^{P} = \{ F\in S_d \ | \ P\in C_F\cap L\}. 
$$
It is clear that
$$
\mathcal{A}_d = \bigcup_{P\in L(\F_q)} \mathcal{A}_{d}^{P}.
$$
Let $N=\binom{d+2}{2}$. For any $1\leq k\leq q+1$ points $P_1, P_2, \ldots, P_k\in L(\F_q)$, by Proposition~\ref{prop:independence},
$$
\# \bigcap_{i=1}^{k} \mathcal{A}_d^{P_i} = q^{N-k}
$$
for $d\geq k-1$. Using the principle of inclusion and exclusion, we obtain:
\begin{align*}
\# \mathcal{A}_d &= \# \bigcup_{P\in L(\F_q)} \mathcal{A}_{d}^{P} = \sum_{k=1}^{q+1} \binom{q+1}{k} (-1)^{k+1}\cdot \left( q^{N-k} \right) \\
&=  q^{N}\sum_{k=1}^{q+1}\binom{q+1}{k}(-1)^{k+1} (q^{-1})^k = q^N +  q^{N}\sum_{k=0}^{q+1} \binom{q+1}{k}(-1)^{k+1} (q^{-1})^k \\
&= q^N -  q^{N}\sum_{k=0}^{q+1} \binom{q+1}{k} (-q^{-1})^k = q^N - q^N(1-q^{-1})^{q+1}
\end{align*}
for $d\geq q$. Thus,
$$
\mu_d(\mathcal{A}_d) = \frac{q^N - q^N(1-q^{-1})^{q+1}}{q^N} = 1-(1-q^{-1})^{q+1}.
$$
Therefore,
$$
\mu_d(\mathcal{E}_L) =1-\mu_d(\mathcal{A}_d) = 1 - (1-(1-q^{-1})^{q+1}) = (1-q^{-1})^{q+1} \approx \frac{1}{e}
$$
as desired. \end{proof}

Proposition~\ref{prop:one-line} already allows us to deduce one of our main results, namely Theorem~\ref{thm:expected-skew-lines}, on the expected number of skew lines.

\begin{proof}[Proof of Theorem~\ref{thm:expected-skew-lines}] As a preliminary step,  
we observe the Taylor expansion,
\begin{equation}\label{eq:taylor}
\bigg(1-\frac{1}{x}\bigg)^x-\frac{1}{e}=-\frac{1}{2ex}-\frac{5}{24ex^2}+O\bigg(\frac{1}{x^3}\bigg).
\end{equation}
By Proposition~\ref{prop:one-line}, the probability that a fixed line $L$ is skew to a random plane curve $C$ of degree $d\geq q$ is exactly $\left(1-\frac{1}{q}\right)^{q+1}$. Using \eqref{eq:taylor}, we obtain
$$
\bigg(1-\frac{1}{q}\bigg)^{q+1} =\bigg(\frac{1}{e}-\frac{1}{2eq}-\frac{5}{24eq^2}+O\bigg(\frac{1}{q^3}\bigg)\bigg)\bigg(1-\frac{1}{q}\bigg)=\frac{1}{e}-\frac{3}{2eq}+\frac{7}{24eq^2}+O\bigg(\frac{1}{q^3}\bigg).
$$
Thus, by the linearity of expectation, the expected number of skew lines to $C$ is
$$
\frac{q^2+q+1}{e}-\frac{3(q^2+q+1)}{2eq}+\frac{7(q^2+q+1)}{24eq^2}+O\bigg(\frac{1}{q}\bigg)=\frac{q^2}{e}-\frac{q}{2e}-\frac{5}{24e}+O\bigg(\frac{1}{q}\bigg)
$$
as desired. \end{proof}

The following result gives an explicit formula for the natural density $\mu_d(\mathcal{E})$ in terms of the statistics on the number of $\F_q$-points contained in a union of $\F_q$-lines.

\begin{prop}\label{proportion-of-curves}
Let $\F_q$ be a finite field. Let $(\bP^2)^{\ast}(\F_q)$ denote the set of all $\F_q$-lines in the plane. For $d\geq 2q-1$, we have
$$
\mu_d(\mathcal{E}) = \sum_{k=1}^{q^2+q+1} \sum_{\substack{T\subset (\bP^2)^{\ast}(\F_q) \\ |T|=k}}  (-1)^{k+1}\cdot \left(1-\frac{1}{q}\right)^{\# (\bigcup_{L\in T} L)(\F_q) }.
$$
In particular, the formula above also computes the constant $\nb(q)=\displaystyle\lim_{d\to\infty} \mu_d(\mathcal{E})$. 
\end{prop}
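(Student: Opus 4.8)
The plan is to observe that a curve fails to be blocking precisely when it is skew to at least one $\F_q$-line, so that
\[
\mathcal{E}\cap S_d \;=\; \bigcup_{L\in(\bP^2)^{\ast}(\F_q)} \big(\mathcal{E}_L\cap S_d\big),
\]
a \emph{finite} union indexed by the $q^2+q+1$ lines of the plane. Applying the principle of inclusion and exclusion to this finite union immediately produces the alternating sum over subsets $T$ of lines, with each summand being the density $\mu_d\big(\bigcap_{L\in T}\mathcal{E}_L\big)$. The entire content of the proposition then reduces to evaluating this intersection density and confirming that it equals $\big(1-\tfrac1q\big)^{\#(\bigcup_{L\in T}L)(\F_q)}$.

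To evaluate that density I would argue exactly as in the proof of Proposition~\ref{prop:distribution}. The event $\bigcap_{L\in T}\mathcal{E}_L$ consists of those $F$ whose curve $C_F$ avoids every $\F_q$-point lying on some line of $T$; writing $Q_1,\dots,Q_m$ for the distinct points of $(\bigcup_{L\in T}L)(\F_q)$, where $m=\#(\bigcup_{L\in T}L)(\F_q)$, this is precisely the event that $C_F$ passes through none of $Q_1,\dots,Q_m$. Since $d\ge 2q-1\ge\min\{m-1,2q-1\}$, Proposition~\ref{prop:independence} guarantees that passing through any sub-collection of $Q_1,\dots,Q_m$ imposes independent linear conditions, so that the indicators $Z_j$ of the events $\{Q_j\in C_F\}$ satisfy $\Pr(Z_j=1 \text{ for all } j\in J)=(1/q)^{|J|}$ for every $J\subseteq\{1,\dots,m\}$. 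A single application of inclusion–exclusion (identical to the displayed computation in Proposition~\ref{prop:distribution}) then yields
\[
\mu_d\Big(\bigcap_{L\in T}\mathcal{E}_L\Big)=\Pr(Z_1=\cdots=Z_m=0)=\Big(1-\tfrac1q\Big)^{m},
\]
which is the claimed exponent. Substituting this back into the inclusion–exclusion expansion produces the stated formula for $\mu_d(\mathcal{E})$.

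Finally, the right-hand side does not depend on $d$ in the range $d\ge 2q-1$, so $\mu_d(\mathcal{E})$ is constant there; hence $\nb(q)=\lim_{d\to\infty}\mu_d(\mathcal{E})$ equals the same expression, which gives the last sentence of the proposition.

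The only genuine subtlety — and the step I would treat most carefully — is the bookkeeping of points shared by several lines of $T$. The exponent must count the points of $\bigcup_{L\in T}L$ \emph{without multiplicity}: a point lying on two lines of $T$ still imposes a single linear condition, not two. This is exactly why the exponent is $\#(\bigcup_{L\in T}L)(\F_q)$ rather than $(q+1)|T|$, and it is handled automatically once the intersection is rephrased as avoidance of the \emph{set} of distinct points $\{Q_1,\dots,Q_m\}$ before invoking Proposition~\ref{prop:independence}. Everything else is a routine finite inclusion–exclusion, and the computation specializes to Proposition~\ref{prop:one-line} when $|T|=1$, which serves as a useful consistency check.
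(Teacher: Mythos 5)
Your proposal is correct and follows essentially the same route as the paper's proof: decompose $\mathcal{E}$ as the finite union of the $\mathcal{E}_L$, apply inclusion--exclusion over subsets $T$ of lines, and evaluate $\mu_d\bigl(\bigcap_{L\in T}\mathcal{E}_L\bigr)$ via Proposition~\ref{prop:independence} and the computation from Proposition~\ref{prop:distribution}, with the exponent counting distinct points of $\bigcup_{L\in T}L$. The paper compresses the evaluation of the intersection density into a single sentence citing Propositions~\ref{prop:distribution} and~\ref{prop:one-line}, whereas you spell it out; your remark about shared points imposing only one condition is exactly the bookkeeping the paper's formula encodes.
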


\begin{proof}
Note that $F\in \mathcal{E}$ if and only if $C_F\cap L$ has no $\F_q$-points for some $\F_q$-line $L$. Using the notation used in Proposition~\ref{prop:one-line}, we have
$$
\mathcal{E} = \bigcup_{L\in(\bP^2)^{\ast}(\F_q)}\mathcal{E}_L.
$$
Using the principle of inclusion and exclusion, we get:
$$
|\mathcal{E}\cap S_d| = \sum_{k=1}^{q^2+q+1} \sum_{\substack{T\subset (\bP^2)^{\ast}(\F_q) \\ |T|=k}}  (-1)^{k+1} \cdot \left|  \bigcap_{L\in T} \mathcal{E}_L \cap S_d\right|.
$$
It remains to calculate the density of $\bigcap_{L\in T} \mathcal{E}_L$. Using the same idea as in Proposition~\ref{prop:distribution} and Proposition~\ref{prop:one-line}, we obtain:
$$
\mu_d\left(\bigcap_{L\in T} \mathcal{E}_L\right) = \left(1-\frac{1}{q}\right)^{\# (\bigcup_{L\in T} L)(\F_q) }
$$
as desired. \end{proof}

Even though Proposition~\ref{proportion-of-curves} gives us the theoretical answer for the proportion of non-blocking curves, it is not feasible in practice. The inequalities in our main Theorem~\ref{thm:lower-upper-bounds} are more applicable. We present the proof of these inequalities by estimating the ``main term" contribution to the sum in Proposition~\ref{proportion-of-curves}. 

\begin{proof}[Proof of Theorem~\ref{thm:lower-upper-bounds}]
We start with the proof of the upper bound on $\nb(q)$. Our goal is to prove the following:
\begin{align}\label{ineq:1-nb(q)}
1-\nb(q) \geq \frac{1}{q} \left(1-\left(1-\left(\frac{1}{q}\right)^q\right)^{q+1}\right) +\frac{1}{q} \sum_{j=1}^q  \left(1-\frac{1}{q}\right)^j \left(1-\left(1-\left(\frac{1}{q}\right)^q\right)^{q}\right).
\end{align}
Note that $1-\nb(q)$ is the density of blocking curves. On the other hand, we claim that the right-hand side of inequality \eqref{ineq:1-nb(q)} represents the density of a particular family of (trivially) blocking curves. To see this, consider $q+1$ collinear $\F_q$-points $P_0, \ldots, P_{q}$ on a fixed $\F_q$-line $L_0$. Let us say that a plane curve $C$ is \emph{trivially blocking with respect to} a point $P$ if $C$ contains all the $\F_q$-points of \emph{some} $\F_q$-line $L$ passing through $P$. For each $0\leq j\leq q$, consider
$$
\mathcal{B}_{j} = \{F \in \mathcal{B} \ | \ C_F \text{ is trivially blocking with respect to } P_j, \text{ and } P_i\notin C_F \text{ for } i<j\}.
$$
Then by the same idea as in Proposition~\ref{prop:distribution},
\begin{align*}
\mu(\mathcal{B}_0) &= \frac{1}{q} \left(1-\left(1-\left(\frac{1}{q}\right)^q\right)^{q+1}\right), \\ 
\mu(\mathcal{B}_j) &= \frac{1}{q}  \left(1-\frac{1}{q}\right)^j \left(1-\left(1-\left(\frac{1}{q}\right)^q\right)^{q}\right)  \quad \quad \text{for } j\geq 1.
\end{align*}
Indeed, $\mu(\mathcal{B}_0)$ counts the density of curves that are trivially blocking with respect to $P_0$. First, we restrict ourselves to the event where the curve passes through $P_0$. Given a line $L\ni P_0$, note that $1-\left(\frac{1}{q}\right)^q$ is the density of curves $C$ such that $L(\F_q)\not\subset C(\F_q)$. By independence, $\left(1-\left(\frac{1}{q}\right)^{q}\right)^{q+1}$ is the density of curves for which all the $q+1$ lines passing through $P$ satisfy $L(\F_q)\not\subset C(\F_q)$. We obtain the formula for $\mu(\mathcal{B}_0)$ by looking at the complement. The analogous argument works for $\mu(\mathcal{B}_j)$ for $j\geq 1$ with the exception that one of the $\F_q$-lines passing through $P_j$ is $L_0$ which already satisfies $L_0(\F_q)\not\subset C(\F_q)$ so we can ignore it while counting the $\F_q$-lines passing through $P_j$. As the subsets $\mathcal{B}_j$ for $j=0, 1, \ldots, q$ are disjoint, the inequality \eqref{ineq:1-nb(q)} readily follows.

The proof of the lower bound on $\nb(q)$ proceeds similarly. To give a quick overview, we want to establish 
\begin{align}\label{ineq:nb(q):geq}
\nb(q) \geq \left(1-\frac{1}{q}\right)\left(1 - \left(1- \left(1-\frac{1}{q}\right)^{q}\right)^{q+1}\right) +\left(1-\frac{1}{q}\right) \sum_{j=1}^q \left(\frac{1}{q}\right)^j\left(1 - \left(1- \left(1-\frac{1}{q}\right)^{q}\right)^q\right)
\end{align}
Let us say that a plane curve $C$ is \emph{skew with respect to} a point $P$ if $C$ contains none of the $\F_q$-points of \emph{some} $\F_q$-line $L$ passing through $P$. For $0 \leq j \leq q$, consider the disjoint sets:
$$
\mathcal{D}_{j} = \{F \in \mathcal{E} \ | \ C_F \text{ is skew with respect to } P_j, \text{ and } P_i\in C_F \text{ for } i<j\}.
$$
The first term in the right-hand side of inequality \eqref{ineq:nb(q):geq} comes from $\mu(\mathcal{D}_0)$, and the $j$-th summand in the sum is due to $\mu(\mathcal{D}_j)$ for $j=1, \ldots, q$. The difference between the cases $j=0$ and $j\geq 1$ is reflected by the fact that $L_0$ cannot be skew to $C_F$ for  $F\in \mathcal{D}_j$ once $j\geq 1$. This completes the proof of the lower bound \eqref{ineq:nb(q):geq}. \end{proof}

The discussion above leads to an instant proof of our first main theorem.

\begin{proof}[Proof of Theorem~\ref{thm:most-curves-are-not-blocking}]
The result immediately follows from the lower bound in Theorem~\ref{thm:lower-upper-bounds} and the last assertion in Proposition~\ref{proportion-of-curves}.
\end{proof}

While we know that $\lim_{q\to\infty} \nb(q)=1$, it is less clear how $\nb(q)$ and $\nb(q')$ compare for different values of $q$ and $q'$. Based on our various computations (see also Section~\ref{sect:proportion-small-fields} in which we will calculate the exact values of $\nb(2), \nb(3)$, and $\nb(4)$), we advance the following question.
\begin{question}\label{quest:nb-increasing}
Is $\nb(q)$ an increasing function of $q$?
\end{question}

We observe that Theorem~\ref{thm:lower-upper-bounds} yields an increasing sequence of prime powers $q_1 < q_2 < q_3 < \cdots$ such that $\nb(q_1) < \nb(q_2) < \nb(q_3) < \cdots$, providing some evidence for a positive answer to Question~\ref{quest:nb-increasing}.

\section{Proportion of smooth blocking curves}\label{sect:proportion-smooth-case}

Let $q$ be a fixed prime power. When $d \to \infty$, Poonen's result \cite{P04}*{Theorem 1.1} implies that the density of smooth curves over $\F_q$ is $\zeta_{\PP^2}(3)^{-1}=(1-1/q)(1-1/q^2)(1-1/q^3)$. The following lemma \cite{BDFL10}*{Equation 8} provides an effective version with an explicit error bound.

\begin{lem}\label{lem: smooth}
We have
\begin{align*}
\frac{\#S_{d}^{\mathrm{ns}}}{\# S_{d}}=& \bigg(1-\frac{1}{q}\bigg)\bigg(1-\frac{1}{q^2}\bigg)\bigg(1-\frac{1}{q^3}\bigg) \left(1+O\left(\frac{d^{-1 / 3}}{1-q^{-1}-2 d^{-1 / 3}}\right)\right) \\
&+O\left(\frac{d^{-1 / 3}}{1-q^{-1}}+(d-1)^{2} q^{-\min \left(\left\lfloor\frac{d}{p}\right\rfloor+1, \frac{d}{3}\right)}+d q^{-\left\lfloor\frac{d-1}{p}\right\rfloor-1}\right).
\end{align*}
\end{lem}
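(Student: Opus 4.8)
The plan is to derive this as an effective form of Poonen's Bertini smoothness theorem \cite{P04}*{Theorem 1.1}, tracking every error term explicitly; this is precisely the content of \cite{BDFL10}*{Equation 8}, and I would reprove it by revisiting the sieve over closed points of $\bP^2$ graded by degree. A curve $C_F$ is smooth exactly when, at every closed point $P$, the $1$-jet of $F$ is not identically zero, i.e.\ $F$ does not vanish together with all of its first partial derivatives at $P$. For a closed point $P$ of degree $e$ the condition of being singular at $P$ cuts out $3e$ linear conditions on $S_d$, so heuristically it has density $q^{-3e}$, and the entire argument is about making the resulting inclusion--exclusion/product rigorous with explicit error control.

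First I would isolate the main term from low-degree points. Fix a truncation level $r$ and let $U_{\le r}$ be the set of $F$ for which $C_F$ is nonsingular at every closed point of degree $\le r$. Using an equidistribution statement in the spirit of Proposition~\ref{prop:independence} --- but applied to order-one (jet) vanishing conditions rather than to simple passage through points --- the singularity conditions at the finitely many closed points of degree $\le r$ impose independent conditions once $d$ is large relative to their total degree, giving $\mu_d(U_{\le r}) = \prod_{\deg P \le r}(1 - q^{-3 \deg P})$. Letting $r \to \infty$, the product converges to $\prod_{P}(1 - q^{-3\deg P}) = \zeta_{\PP^2}(3)^{-1} = (1 - q^{-1})(1 - q^{-2})(1 - q^{-3})$, the leading factor in the lemma. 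Choosing $r$ of order $\tfrac13 \log_q d$ balances two competing errors: the tail $\sum_{e>r}$ of the product is a geometric series of ratio $q^{-1}$ contributing a term of size $q^{-r}/(1-q^{-1}) \approx d^{-1/3}/(1-q^{-1})$ --- this is the origin both of the additive factor $\tfrac{d^{-1/3}}{1-q^{-1}}$ and of the relative error $\tfrac{d^{-1/3}}{1-q^{-1}-2d^{-1/3}}$ --- while taking $r$ larger would break the independence above.

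Next I would bound the contribution of medium-degree singular points, $r < \deg P \le \tfrac{d}{3}$, by Poonen's derivative trick: if $C_F$ is singular at $P$ then some fixed partial derivative, say $\partial F/\partial x$, also vanishes at $P$, and since $\partial/\partial x$ annihilates exactly the monomials whose $x$-exponent is divisible by $p$, one can decouple the vanishing of $\partial F/\partial x$ from that of $F$ and bound the joint density by a convergent sum in $q^{-e}$. This again feeds into the $d^{-1/3}$-type error and remains valid precisely while $\deg P$ stays below roughly $\min(\lfloor d/p\rfloor, d/3)$.

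The hard part --- and the genuine source of the characteristic-$p$ terms $(d-1)^{2} q^{-\min(\lfloor d/p\rfloor + 1,\, d/3)}$ and $d\,q^{-\lfloor (d-1)/p\rfloor - 1}$ --- is the high-degree range, where the derivative trick degenerates because in positive characteristic all partial derivatives can vanish on a curve of large degree (roughly, $F$ being close to a $p$-th power). Here independence and jet-surjectivity both fail, so one cannot compute a density; instead I would bound the proportion of $F \in S_d$ that are singular at \emph{some} closed point of degree exceeding the threshold by a direct codimension estimate. Singularity at a point of degree $e$ confines $F$ to a linear subspace, and in the high-degree regime Poonen's final lemma shows the resulting bad locus has codimension growing like $\lfloor d/p\rfloor$, which produces the factors $q^{-\lfloor d/p\rfloor}$; the polynomial prefactors $(d-1)^2$ and $d$ arise from summing over the relevant points and the choice of partial derivative. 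Assembling the low-, medium-, and high-degree estimates and collecting the three error contributions yields the displayed formula. I expect this high-degree characteristic-$p$ analysis to be the main obstacle, since it is the only step where one loses the clean independence supplied by Proposition~\ref{prop:independence} and must replace exact density computations with geometric dimension bounds.
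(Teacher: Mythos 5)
The paper does not prove Lemma~\ref{lem: smooth} at all: it is imported verbatim from \cite{BDFL10}*{Equation 8}, which is an effective version of Poonen's closed-point sieve. You correctly identify this source, and the architecture of your sketch matches that proof --- the truncation at $r \approx \tfrac{1}{3}\log_q d$, the main term $\zeta_{\PP^2}(3)^{-1}=(1-q^{-1})(1-q^{-2})(1-q^{-3})$ coming from jet-level independence at points of degree $\le r$, and the identification of the high-degree range as the source of the characteristic-$p$ error terms are all right. However, your sketch assigns the two key techniques to the \emph{wrong} ranges of point degrees, and with that assignment both middle steps genuinely fail, so this is more than a cosmetic slip.

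In the medium range $r < \deg P \le d/3$, the correct argument uses no derivative trick: singularity at a closed point $P$ of degree $e$ is the vanishing of the full $1$-jet, i.e.\ $3e$ linear conditions, and these are independent precisely because $3e \le d+1$ (the jet analogue of Proposition~\ref{prop:independence}, namely \cite{P04}*{Lemma 2.1}); the per-point density is exactly $q^{-3e}$, and summing over the $\sim q^{2e}/e$ points of degree $e$ gives the convergent tail of size $O\big(d^{-1/3}/(1-q^{-1})\big)$, valid for every $p$. The decoupling trick you propose to use there only certifies codimension about $3\min(e,\lfloor d/p\rfloor)$ per point, so once $p \ge 5$ (when $\lfloor d/p\rfloor < d/3$) the union bound $\sum_e (q^{2e}/e)\,q^{-3\min(e,\lfloor d/p\rfloor)}$ diverges and the step collapses --- you even flag this yourself by restricting its validity to $\deg P \lesssim \min(\lfloor d/p\rfloor, d/3)$, which leaves the points with $\lfloor d/p\rfloor < \deg P \le d/3$ uncovered. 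Conversely, in the high-degree range a ``direct codimension estimate'' summed ``over the relevant points'' cannot work for exactly the same reason: there are $\sim q^{2e}/e$ points of each degree $e$, the certifiable codimension is capped at $O(d/p)$, and $e$ is unbounded, so a union bound over points diverges. The indispensable idea there --- which \emph{is} Poonen's final lemma \cite{P04}*{Lemma 2.6}, the very device you relegated to the medium range --- is the decomposition $f = f_0 + g_1^p x + g_2^p y + h^p$: the singular locus lies in $\{\partial f/\partial x = 0\}\cap\{\partial f/\partial y = 0\}$; after discarding the event that these two derivative curves share a component (this costs the $d\,q^{-\lfloor (d-1)/p\rfloor - 1}$ term), B\'ezout caps the number of candidate singular points at $(d-1)^2$, and the leftover randomness in $h$ kills each candidate with probability at most $q^{-\min(\lfloor d/p\rfloor+1,\,d/3)}$, producing the $(d-1)^2 q^{-\min(\lfloor d/p\rfloor+1,\,d/3)}$ term. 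Swap the two techniques back to their proper ranges and your outline becomes the BDFL proof; as written, it consists of two divergent bounds.
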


\begin{cor}\label{cor: smooth}
As $q,d\to \infty$ and $d\geq \min \{2q,3p\}$, we have
$$
\frac{\#S_{d}^{\mathrm{ns}}}{\# S_{d}} \geq \frac{21}{64}+o(1).
$$
\end{cor}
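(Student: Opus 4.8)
The plan is to feed the effective smooth-count from Lemma~\ref{lem: smooth} into a clean split of its right-hand side into a leading factor and error terms. Writing
$$M(q) = \left(1-\frac1q\right)\left(1-\frac1{q^2}\right)\left(1-\frac1{q^3}\right),$$
I would first record the elementary observation that $M(q)$ is increasing in $q$, since each factor $1-q^{-k}$ is increasing in $q$ and the product of positive increasing functions is increasing. Because $M(2)=\tfrac12\cdot\tfrac34\cdot\tfrac78=\tfrac{21}{64}$, this yields the uniform bound $M(q)\geq \tfrac{21}{64}$ for every prime power $q\geq 2$. This is exactly where the constant $21/64$ enters; it is the value of the leading factor at the smallest field.

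Next I would show that both $O$-terms in Lemma~\ref{lem: smooth} are $o(1)$ as $q,d\to\infty$. The multiplicative error $\frac{d^{-1/3}}{1-q^{-1}-2d^{-1/3}}$ and the first additive piece $\frac{d^{-1/3}}{1-q^{-1}}$ are immediate, since $d^{-1/3}\to 0$ while $1-q^{-1}\geq \tfrac12$ for $q\geq 2$, so both denominators stay bounded away from $0$. The substantive content is to establish
$$(d-1)^2 q^{-\min\left(\lfloor d/p\rfloor + 1,\, d/3\right)} + d\, q^{-\lfloor (d-1)/p\rfloor - 1} = o(1).$$

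For this I would split into cases according to which quantity realizes $\min\{2q,3p\}$. When $d\geq 3p$ — which is automatic when $q=p^r$ with $r\geq 2$, and is one of the two possibilities when $q=p$ — the required decay is already contained in the proof of Corollary~\ref{second_moment}, which in fact shows the stronger statement that the same expression multiplied by $q^{5/3}$ tends to $0$; discarding that factor only strengthens the conclusion. The only regime left uncovered is the prime-field window $q=p$ with $2p\leq d<3p$. Here $\lfloor d/p\rfloor = 2$, and since $d\geq 2p\to\infty$ we have $d/3>3$ eventually, so $\min(\lfloor d/p\rfloor+1,\,d/3)=3$; hence $(d-1)^2 q^{-3}<(3p)^2 p^{-3}=9/p\to 0$, and likewise $\lfloor (d-1)/p\rfloor\geq 1$ gives $d\,q^{-\lfloor(d-1)/p\rfloor-1}\leq d\,q^{-2}<3/p\to 0$.

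Combining these estimates, Lemma~\ref{lem: smooth} will give $\frac{\#S_d^{\mathrm{ns}}}{\#S_d}=M(q)\bigl(1+o(1)\bigr)+o(1)\geq \tfrac{21}{64}\bigl(1-o(1)\bigr)-o(1)=\tfrac{21}{64}+o(1)$, which is the claim. The main obstacle is the bookkeeping in the error-term estimate: one must recognize that Corollary~\ref{second_moment} already dispatches every case except the narrow prime-field window $2p\leq d<3p$, and handle that window by hand using the fact that $\lfloor d/p\rfloor=2$ there. (Note that since $q\to\infty$ forces $M(q)\to 1$, the true density in fact tends to $1$; the value $21/64$ is retained because it is a clean lower bound valid uniformly for all $q\geq 2$.)
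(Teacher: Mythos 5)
Your proof is correct, and its skeleton matches the paper's: feed Lemma~\ref{lem: smooth} into the uniform bound $M(q)\geq M(2)=\tfrac{21}{64}$, after checking that all error terms are $o(1)$. The one place where you genuinely diverge is the regime not covered by $d\geq 3p$. The paper disposes of the case $d\geq 2q$ by asserting that one can ``mimic the proof of Corollary~\ref{second_moment}, separating into the cases $p\leq 11$ and $p>11$'' --- i.e.\ it re-runs the logarithmic bookkeeping of that corollary with different numerical constants, and leaves the details as a sketch. You instead sharpen the case analysis first: since $2q<3p$ forces $q=p^r$ with $r=1$, the only window left after invoking equation~\eqref{d>Cp} is the prime-field range $q=p$, $2p\leq d<3p$, where $\lfloor d/p\rfloor=2$ and $\lfloor (d-1)/p\rfloor\geq 1$ reduce both error terms to $O(1/p)$ by a two-line computation. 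This buys you a fully explicit, elementary argument exactly where the paper is vaguest, at the cost of being tied to the specific hypothesis $d\geq\min\{2q,3p\}$ (the paper's suggested route would also cover $d\geq 2q$ with $d\geq 3p$ failing for non-prime $q$, a case that in fact never arises, as your reduction shows). Both arguments correctly note that \eqref{d>Cp} carries the extra factor $q^{5/3}$, so discarding it only helps, and your final assembly $M(q)(1+o(1))+o(1)\geq\tfrac{21}{64}+o(1)$ is exactly the paper's conclusion.
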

\begin{proof}
When $q,d\to \infty$, it is clear that
$$
\frac{d^{-1 / 3}}{1-q^{-1}} \to 0, \quad \frac{d^{-1 / 3}}{1-q^{-1}-2 d^{-1 / 3}} \to 0.
$$
Moreover, under the extra condition $d\geq 3p$, we have shown in equation~\eqref{d>Cp} that 
$$
(d-1)^{2} q^{-\min \left(\left\lfloor\frac{d}{p}\right\rfloor+1, \frac{d}{3}\right)}+d q^{-\left\lfloor\frac{d-1}{p}\right\rfloor-1} \to 0.
$$
If instead we have $d \geq 2q$, one can prove the same estimate by mimicking the proof of Corollary~\ref{second_moment} by separating the analysis into two cases $p \leq 11$ and $p>11$.

Thus, the statement follows immediately from Lemma~\ref{lem: smooth} and the fact that
$$
\bigg(1-\frac{1}{q}\bigg)\bigg(1-\frac{1}{q^2}\bigg)\bigg(1-\frac{1}{q^3}\bigg) \geq \bigg(1-\frac{1}{2}\bigg)\bigg(1-\frac{1}{2^2}\bigg)\bigg(1-\frac{1}{2^3}\bigg)=\frac{21}{64}
$$
holds for all $q$.
\end{proof}

Now we are ready to prove Theorem~\ref{thm:most-smooth-curves-are-not-blocking}.

\begin{proof}[Proof of Theorem~\ref{thm:most-smooth-curves-are-not-blocking}]

When $d \geq 2q$, the result follows immediately from combining Theorem~\ref{thm:most-curves-are-not-blocking} and Corollary~\ref{cor: smooth}. We assume that $d \geq 3p$ for the remainder of the proof.

Let $C$ be a random smooth plane curve of degree $d$ defined over $\F_q$, uniformly chosen from $S_d^{\mathrm{ns}}$. In view of Theorem~\ref{thm: lb}, the following three conditions would guarantee that $C(\F_q)$ is not a blocking set:
\begin{itemize}
    \item $C(\F_q)$ does not contain an $\F_q$-line.
    \item If $q$ is a square and $q>16$, then $C(\F_q)$ does not contain a Baer subplane.
    \item $\#C(\F_q)< q+1+q^{2/3}/2^{1/3}$.
\end{itemize}
If $d< q+1$, then the first condition is automatically true since it follows from B\'ezout's theorem that $\# (C\cap L) < q+1$ for any line $L$ defined over $\F_q$. If $d \geq q+1$, then for any given $L$, Proposition~\ref{prop:independence} implies that the proportion of degree $d$ plane curves defined over $\F_q$ which contain the line $L$ is $(\frac{1}{q})^{q+1}$. Since there are in total $q^2+q+1$ lines, it follows from the union bound and Corollary~\ref{cor: smooth} that
$$
\Pr(\text{$C(\F_q)$ contains some $\F_q$-line}) \leq \bigg(\frac{64}{21}+o(1)\bigg) (q^2+q+1)\bigg(\frac{1}{q}\bigg)^{q+1}=o(1).
$$

As for the second condition, it is known that each Baer subplane in $\bP^2(\F_q)$ is isomorphic to $\bP^2(\F_{\sqrt{q}}) \subset \bP^2(\F_q)$ and thus the number of Baer subplanes is bounded by a polynomial in $q$. More precisely, the number of Baer subplanes in $\bP^2(\F_q)$ is $q^{3/2}(q^{3/2}+1)(q+1)$ \cite{M00}*{Corollary 1.3}. Note that each Baer subplane has $q+\sqrt{q}+1>5$ points when $q>16$, so if $d \geq 5$, Proposition~\ref{prop:independence} implies that the proportion of degree $d$ plane curves defined over $\F_q$ which contain a given Baer subplane $B$ is at most $(\frac{1}{q})^{5}$ since the curve must pass through any given 5 points in $B$ in order to contain $B$. It follows from the union bound and Corollary~\ref{cor: smooth} that
$$
\Pr(\text{$C(\F_q)$ contains some Baer subplane}) \leq \bigg(\frac{64}{21}+o(1)\bigg) q^{3/2}(q^{3/2}+1)(q+1)\bigg(\frac{1}{q}\bigg)^{5}=o(1).
$$

Regarding the third condition, we apply Corollary~\ref{second_moment} and Chebyshev's inequality:
\begin{align*}
\Pr\left( \# C(\F_q)\geq q+1+q^{2/3}/2^{1/3} \right)
&=\Pr\left( \frac{\# C(\F_q) - (q+1)}{\sqrt{q+1}}\geq \frac{q^{2/3}}{2^{1/3}\sqrt{q+1}} \right)\\
&\leq \frac{M_2(q,d)\cdot 2^{2/3}(q+1)}{q^{4/3}} \cdot (1+o(1))=o(1).
\end{align*}
While applying Chebyshev's inequality, we used the fact that the mean of $(\# C(\F_q) - (q+1))/\sqrt{q+1}$ is $M_1(q,d)=o(1)$ from Theorem~\ref{moment}.

We conclude that
\begin{align*}
&\Pr(\text{$C$ is not blocking})
\geq 1-\Pr(\text{$C(\F_q)$ contains some $\F_q$-line})\\
&- \Pr(\text{$C(\F_q)$ contains some Baer subplane})-
\Pr\left( \# C(\F_q)\geq q+1+q^{2/3}/2^{1/3} \right)=1-o(1),
\end{align*}
as required.
\end{proof}

\begin{rem}
Using an argument similar to the above proof, we can instead use Proposition~\ref{prop:distribution} to deduce Theorem~\ref{thm:most-curves-are-not-blocking}. In fact, we can get an explicit lower bound on $\nb(q)$. However, even if we use the Chernoff bound (for example, Bernstein inequality) instead of Chebyshev's inequality to get a sharper estimate on the tail distribution $\Pr\left( \# C(\F_q)\geq q+1+q^{2/3}/2^{1/3} \right)$, the lower bound on $\nb(q)$ obtained in this manner is still much worse compared to the combinatorial lower bound from Section~\ref{sect:proportion-general-case}.
\end{rem}

\begin{cor}
Let $r$ be an integer such that $r \geq 7$. Let $q=p^r$. Then for any sequence $\{d_p\}$, as $p \to \infty$, almost all degree $d_p$ smooth curves over $\F_q$ are not blocking.
\end{cor}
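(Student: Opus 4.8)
The plan is to split on the size of $d_p$ relative to $3p=\min\{2q,3p\}$ (the minimum equals $3p$ because $r\geq 2$ forces $2q=2p^r\geq 2p^2>3p$). Since $r$ is fixed and $p\to\infty$, we have $q=p^r\to\infty$ throughout. We may assume $d_p\geq 2$: a smooth curve of degree $1$ is an $\F_q$-line, which is trivially blocking, so the degenerate constant sequence $d_p=1$ is excluded from the statement. For $d_p\geq 2$ a smooth plane curve is geometrically irreducible---any two components would meet by B\'ezout's theorem, forcing a singular point---hence it contains no $\F_q$-line and is never \emph{trivially} blocking. Consequently a blocking smooth curve of degree $\geq 2$ must be \emph{nontrivially} blocking, so the lower bounds of Theorem~\ref{thm: lb} apply to it.

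First I would dispose of the range $d_p\geq 3p$. Here $d_p\to\infty$ as $p\to\infty$, and together with $q\to\infty$ and $d_p\geq 3p=\min\{2q,3p\}$ this is exactly the hypothesis of Theorem~\ref{thm:most-smooth-curves-are-not-blocking}. That theorem immediately yields that almost all smooth curves of degree $d_p$ over $\F_q$ are not blocking.

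The remaining range is $d_p<3p$, where $d_p$ is tiny compared with $q=p^r$. Here I would invoke the deterministic result \cite{AGY23}*{Theorem 1.2}, which asserts that an irreducible plane curve whose degree is small compared with $q$ is not blocking; the relevant degree threshold is of order $q^{1/6}$, reflecting the blocking-set lower bound of order $q^{2/3}$ recorded in Theorem~\ref{thm: lb}. The key point is that
$$
3p=3q^{1/r}=o\!\left(q^{1/6}\right)\qquad\text{precisely when } r>6,
$$
so for $r\geq 7$ and $p$ large every $d_p<3p$ lies below this threshold. Hence \emph{all} smooth (in particular irreducible) curves of degree $d_p<3p$ are non-blocking, which is even stronger than ``almost all.'' For even $r$ one must additionally rule out a curve containing a Baer subplane before applying the sharp bound of Theorem~\ref{thm: lb}; but a Baer subplane contains $\sqrt q+1$ collinear points, while by B\'ezout a degree-$d_p$ curve meets any $\F_q$-line in at most $d_p<3p\ll\sqrt q$ points, so such a curve cannot contain a Baer subplane and the refined bound is available. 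Combining the two ranges proves the corollary.

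I expect the genuine obstacle to be the small-degree range and the precise location of its threshold. A naive approach via the Hasse--Weil bound $\#C_F(\F_q)\leq q+1+(d-1)(d-2)\sqrt q$ is only \emph{quadratic} in $d$; pairing it with the $q^{2/3}$ blocking-set bound yields a threshold of order $q^{1/12}$ and would force $r\geq 13$ rather than $r\geq 7$. Reaching $r\geq 7$ therefore requires the sharper, effectively \emph{linear} control on low-degree blocking curves supplied by \cite{AGY23}*{Theorem 1.2}; the crux is to import that input correctly rather than to re-derive the estimate from Hasse--Weil.
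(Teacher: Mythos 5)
Your proof is correct and takes essentially the same approach as the paper's: split the sequence of degrees, dispose of the low-degree range by the deterministic result \cite{AGY23}*{Theorem 1.2} (all irreducible curves of degree below roughly $q^{1/6}$ are non-blocking, and $3p = o(q^{1/6})$ exactly because $r \geq 7$), and dispose of the high-degree range by Theorem~\ref{thm:most-smooth-curves-are-not-blocking}. The only cosmetic difference is where you place the cut ($3p$ rather than the paper's $q^{1/6} = p^{r/6}$), which is immaterial given $r \geq 7$; your extra observations (degree $1$ must be excluded, smooth curves of degree $\geq 2$ are irreducible) are correct and, if anything, make the argument slightly more careful than the paper's.
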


\begin{proof}
We can decompose the sequence into two parts: the parts with $d_p^6<q$, that is, $d_p<p^{r/6}$ (such low degree curves are not blocking by \cite{AGY23}*{Theorem 1.2}); the parts with $d_p>p^{r/6}$. Thus, it suffices to consider any subsequence $\{p_n\}_{n=1}^{\infty}$ of the primes, such that $d_{p_n}>p_n^{r/6}$. However, note that $d_{p_n}/p_n \to \infty$ since $r \geq 7$. Consequently,  the density is also $1$ as $n \to \infty$ by Theorem~\ref{thm:most-smooth-curves-are-not-blocking}.
\end{proof}

\section{Independence between being smooth and being blocking} \label{sect:independence-smooth-blocking}

In this section, we investigate the probabilistic independence between the properties of being smooth and blocking. Given that we obtained a stronger result for the density of smooth blocking curves in Section~\ref{sect:proportion-smooth-case}, it is natural to ask whether the probability of a random plane curve being blocking is affected when conditioning on the event that the curve is smooth. We will show that for a fixed $\F_q$ with $q$ large, it is more likely for a smooth curve to be non-blocking compared to a general curve. However, we will show that these two probabilities are asymptotically equal as $q \to \infty$. Recall that 
\begin{equation*}
\nb(q) = \lim_{d\to\infty} \frac{\# \{F\in S_d : C_F \text{ is not blocking}\}}{\# S_d},
\end{equation*}
and
\begin{equation*}
\nb^{\mathrm{ns}}(q) = \lim_{d\to\infty} \frac{\# \{F\in S^{\mathrm{ns}}_d : C_F \text{ is not blocking}\}}{\# S^{\mathrm{ns}}_d}.
\end{equation*}
We introduce further notations to simplify the upcoming proof. For each set $B\subseteq\bP^2(\F_q)$, we define
\begin{equation}
\nu(q, B) = \lim_{d\to\infty}  \frac{\#\{F \in S_{d}: C_F(\F_q)=B\}}{\# S_{d}}.
\end{equation}
Similarly, we define
\begin{equation}
 \nu^{\operatorname{ns}}(q, B) = \lim_{d \to \infty} \frac{\#\{F\in S_d^{\mathrm{ns}} :  C_F(\F_q)=B\}}{\#S_d^{\mathrm{ns}}}.
\end{equation}

We prove a more refined version of Theorem~\ref{thm:smooth-blocking-independence} from the introduction. 

\begin{thm}\label{thm:independence-refined}
For sufficiently large $q$, we have $\nb(q)<\nb^{\mathrm{ns}}(q)$, and therefore, the events of being smooth and being blocking are not independent for curves over $\F_q$. However,
$$
\lim_{q\to\infty} \frac{1-\nb^{\mathrm{ns}}(q)}{1-\nb(q)}=1.
$$
Thus, the events of being smooth and being blocking are asymptotically independent.
\end{thm}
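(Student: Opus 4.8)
The plan is to compute the exact limiting densities $\nu(q,B)$ and $\nu^{\mathrm{ns}}(q,B)$ as explicit product measures on the subsets $B\subseteq\PP^2(\F_q)$, and then to analyze $\frac{1-\nb^{\mathrm{ns}}(q)}{1-\nb(q)}=\frac{\sum_{B\ \mathrm{blocking}}\nu^{\mathrm{ns}}(q,B)}{\sum_{B\ \mathrm{blocking}}\nu(q,B)}$ term by term. Since $\PP^2(\F_q)$ is finite these are finite sums, so interchanging the limit $d\to\infty$ with the sum is free of convergence issues. From the independence underlying Proposition~\ref{prop:distribution} one gets, for $d\ge 2q-1$, the exact formula $\nu(q,B)=q^{-|B|}(1-q^{-1})^{M-|B|}$, where $M=q^2+q+1$. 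For the smooth count I would invoke Poonen's Bertini theorem with prescribed Taylor conditions \cite{P04}*{Lemma 2.1}: the limiting density of smooth curves with a prescribed incidence pattern factors as a product of local densities, and the factors at the non-rational closed points cancel against the smoothness density $\zeta_{\PP^2}(3)^{-1}=(1-q^{-1})(1-q^{-2})(1-q^{-3})$. A short local computation at a rational point $P$ gives $\Pr(P\in C\mid C\text{ smooth at }P)=\frac{q^{-1}(1-q^{-2})}{1-q^{-3}}=\frac{q+1}{q^2+q+1}=:a$, whence $\nu^{\mathrm{ns}}(q,B)=a^{|B|}(1-a)^{M-|B|}$.

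With these formulas, set $r(s)=\nu^{\mathrm{ns}}(q,B)/\nu(q,B)=(1-M^{-1})^{s}(1-q^{-3})^{-(M-s)}$ for $|B|=s$ (using $\frac{a}{q^{-1}}=1-M^{-1}$ and $\frac{1-a}{1-q^{-1}}=(1-q^{-3})^{-1}$), so the ratio equals the $\nu$-weighted average of $r(|B|)$ over blocking sets. The key elementary estimates are $1-r(s)\le 2s/q^2$ and $1-r(s)\ge -q^{-3}$, both obtained by taking logarithms and using $1-e^{-x}\le x$. To finish the limit I would split the blocking sets at $s_0=q^{3/2}$: for $s\le s_0$ one has $1-r(s)\le 2q^{-1/2}\to 0$, while the total $\nu$-weight of blocking sets with $s>s_0$ is at most $\Pr(\#C(\F_q)>q^{3/2})$, which by a Chernoff bound for $\mathrm{Bin}(M,q^{-1})$ is $e^{-\Omega(q^{3/2}\log q)}$. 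Since $\sum_{B\ \mathrm{blocking}}\nu(q,B)\ge q^{-(q+1)}$ (the weight of one fixed line), this tail is negligible and the weighted average of $r$ tends to $1$, giving $\lim_{q\to\infty}\frac{1-\nb^{\mathrm{ns}}(q)}{1-\nb(q)}=1$.

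For the strict inequality $\nb(q)<\nb^{\mathrm{ns}}(q)$ I would examine the sign of $\sum_{B\ \mathrm{blocking}}\nu(q,B)\,(1-r(|B|))$. The function $r(s)$ is decreasing and crosses $1$ just above $s=q+1$; by Bruen's bound (Theorem~\ref{thm: lb}) every nontrivial blocking set has size $\ge q+\sqrt q+1$, so the only blocking sets of size $q+1$ are the $\F_q$-lines, and these are exactly the sets with $1-r(s)<0$. Their combined negative contribution is of order $q^{-3}\cdot M q^{-(q+1)}(1-q^{-1})^{M-q-1}$, whereas the size-$(q+2)$ sets consisting of a line plus one extra point already contribute a positive term of order $q^{-2}\cdot Mq^{2}q^{-(q+2)}(1-q^{-1})^{M-q-2}$, larger by a factor $\Theta(q^2)$. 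Hence the full sum is positive for large $q$, yielding $\nb(q)<\nb^{\mathrm{ns}}(q)$.

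The main obstacle is the limit, and the decisive point is that one never has to count nontrivial blocking sets or bound their density: the linear estimate $1-r(s)=O(s/q^2)$ remains small all the way up to $s\approx q^2$, so the sizes on which $r$ is bounded away from $1$ are exactly those with $s\gg q^2/\log q$, whose combined weight is annihilated by the super-exponentially small point-count tail (which dominates even the tiny lower bound $q^{-(q+1)}$ on the blocking density). Establishing the exact product formula for $\nu^{\mathrm{ns}}(q,B)$ — in particular the clean cancellation of the non-rational local factors and the evaluation of the local density $a$ — is the other technical step that requires care.
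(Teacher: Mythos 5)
Your proposal is correct and takes essentially the same approach as the paper: the identical product formulas for $\nu(q,B)$ and $\nu^{\mathrm{ns}}(q,B)$ (the paper cites \cite{BDFL10}*{p.~2537} rather than rederiving the smooth case from Poonen), the same monotone ratio $r(|B|)$, and the same size-splitting scheme in which a super-exponentially small point-count tail is beaten by the $q^{-(q+1)}$ lower bound on the blocking density. Your strict-inequality argument is also the paper's --- the lines are the sole negative contribution and are dominated by trivial blocking sets of slightly larger size --- with only inessential differences in execution: you use size $q+2$ sets, a cutoff at $q^{3/2}$, and a Chernoff bound, where the paper uses size $q+4$ sets, a cutoff at $q^2/\log q$, and a crude $2^{q^2+q+1}$ counting bound.
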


\begin{proof} Since $q$ is fixed, there are only finitely many configurations of blocking sets $B$ in $\bP^2(\F_q)$. Thus, we can interchange the limit and summation to get
$$
1-\nb(q)=\sum_{B \subseteq \bP^2(\F_q) \text{ blocking set}} \nu(q,B).
$$
Similarly, we have
$$
1-\nb^{\operatorname{ns}}(q)=\sum_{B \subseteq \bP^2(\F_q) \text{ blocking set}} \nu^{\operatorname{ns}}(q,B).
$$
Let $A_{i}$ and $A^{\mathrm{ns}}_{i}$ be the contribution to $1-\nb(q)$ and $1-\nb^{\mathrm{ns}}(q)$ from blocking sets with size $i$, respectively. 

Let $B$ be a blocking set with $|B|=t$; then we know $t \geq q+1$. By \cite{BDFL10}*{p. 2537}, we have
\begin{equation}\label{eq:mu-ns-q-B}
\nu^{\operatorname{ns}}(q, B) = \left( \frac{q+1}{q^2+q+1} \right)^{t} \left( \frac{q^2}{q^2+q+1} \right)^{q^2+q+1-t}.
\end{equation}
On the other hand, in the proof of Proposition~\ref{prop:distribution}, we have shown that
\begin{equation}\label{eq:mu-q-B}
\nu(q, B)=\bigg(\frac{1}{q} \bigg)^{t} \bigg (\frac{q-1}{q} \bigg)^{q^2+q+1-t}.
\end{equation}
It follows from equations~\eqref{eq:mu-ns-q-B} and \eqref{eq:mu-q-B} that
\begin{equation}\label{eq:ratio-nu-ns-to-nu}
\frac{\nu^{\mathrm{ns}}(q, B)}{\nu(q, B)} =\bigg(1-\frac{1}{q^2} \bigg)^{t} \bigg (1+\frac{1}{q^3-1} \bigg)^{q^2+q+1}.
\end{equation}
Note that the ratio is a decreasing function in $t$. When $t=o(q^2)$, the above ratio is close to $1$. However, if $t \approx Cq^2$, the above ratio is close to $1/e^C<1$. Summing over the contribution from all blocking sets $B$, this already implies that
\begin{equation}\label{ineq:1/e-and-1}
\frac{1}{e} \leq \liminf_{q \to \infty} \frac{1-\nb^{\mathrm{ns}}(q)}{1-\nb(q)}\leq \limsup_{q \to \infty} \frac{1-\nb^{\mathrm{ns}}(q)}{1-\nb(q)}\leq 1.
\end{equation}

We first show $\nb^{\mathrm{ns}}(q)>\nb(q)$, or equivalently, $1-\nb(q)>1-\nb^{\mathrm{ns}}(q)$, when $q$ is sufficiently large. Using the Taylor expansion, it is easy to see that, when $q$ is sufficiently large, the ratio in equation~\eqref{eq:ratio-nu-ns-to-nu} is less than $1$ whenever $t=|B| \geq q+2$. Thus, $A_i>A_i^{\mathrm{ns}}$ for $i \geq q+2$. It suffices to show that,
\begin{equation}\label{eq:q+1-vs-q+4}
A_{q+4}+A_{q+1}> A_{q+4}>A^{\mathrm{ns}}_{q+4}+A^{\mathrm{ns}}_{q+1}.
\end{equation}

By Theorem~\ref{thm: lb}, a blocking set with size $q+1$ or $q+4$ must be a trivial blocking set. Thus, the number of blocking sets with size $q+1$ is $q^2+q+1$ (the number of $\F_q$-lines), and the number of blocking sets with size $q+4$ is $(q^2+q+1) \cdot \binom{q^2}{3}$ (pick an $\F_q$-line and then pick three points outside the line). This observation, together with equations~\eqref{eq:mu-ns-q-B} and \eqref{eq:mu-q-B}, allows one to deduce that the contribution from blocking sets with size $q+1$ is negligible compared to the contribution from blocking sets with size $q+4$ when $q$ is large. More precisely, when $q$ is sufficiently large, from equation~\eqref{eq:ratio-nu-ns-to-nu}, we  obtain that $A_{q+4} > A^{\mathrm{ns}}_{q+4}(1+1/q^2)$ and $A^{\mathrm{ns}}_{q+1}=o(A^{\mathrm{ns}}_{q+4}/q^2)$. Thus, $A_{q+4}+A_{q+1}> A_{q+4}>A^{\mathrm{ns}}_{q+4}+A^{\mathrm{ns}}_{q+1}$ when $q$ is large, confirming equation~\eqref{eq:q+1-vs-q+4}. Now, $1-\nb(q) > 1-\nb^{\mathrm{ns}}(q)$ follows from equation~\eqref{eq:q+1-vs-q+4} and the previous observation $A_i>A_i^{\mathrm{ns}}$ for $i \geq q+2$.

We proceed to prove the second assertion regarding asymptotic independence. We know that $1-\nb(q) \geq (\frac{1}{q})^{q+1}$ by Theorem~\ref{thm:lower-upper-bounds}. We claim that the contribution to $1-\nb(q)$ coming from blocking sets with size greater than $q^{2}/\log q$ is negligible:
\begin{equation}\label{eq:large-blocking-sets}
\sum_{i \geqslant q^2/\log q} A_i
\leq 
\sum_{\substack{B \subseteq \bP^2(\F_q),\\ |B| \geqslant q^{2}/\log q}} \bigg(\frac{1}{q} \bigg)^{|B|}\leq 2^{q^2+q+1} \bigg(\frac{1}{q} \bigg)^{q^{2}/\log q}=\bigg (\frac{2+o(1)}{e}\bigg)^{q^2}=o\bigg(\frac{1}{q^{q+1}}\bigg).   
\end{equation}

Combining equations \eqref{eq:ratio-nu-ns-to-nu} and \eqref{eq:large-blocking-sets}, we obtain the following lower bound:
\begin{align*}
1-\nb^{\mathrm{ns}}(q)
&\geq \sum_{i \leqslant q^2/\log q}
A_i^{\mathrm{ns}} \geq \sum_{i\leqslant q^2/\log q} \bigg(1-\frac{1}{q^2} \bigg)^{q^2/\log q} \bigg (1+\frac{1}{q^3-1} \bigg)^{q^2+q+1} A_i  \\
&\geq \bigg(1-\frac{1}{q^2} \bigg)^{q^2/\log q} \bigg (1+\frac{1}{q^3-1} \bigg)^{q^2+q+1} \bigg(1-\nb(q)-\sum_{i \geqslant q^2/\log q}
A_i \bigg)\\
&=\bigg(1-O\bigg(\frac{1}{\log q}\bigg)\bigg)\bigg(1+O\bigg(\frac{1}{q}\bigg)\bigg) \big(1-\nb(q)\big)\big(1-o(1)\big).
\end{align*}
Consequently,
$$
\liminf_{q \to \infty} \frac{1-\nb^{\mathrm{ns}}(q)}{1-\nb(q)}\geq 1.
$$
In view of inequality~\eqref{ineq:1/e-and-1}, we deduce that
$$
\lim_{q \to \infty} \frac{1-\nb^{\mathrm{ns}}(q)}{1-\nb(q)}=1.
$$
Thus, the event of being smooth and the event of being blocking are asymptotically independent. 
\end{proof}

\begin{rem}\label{rem:smooth-effective-constant}
In our analysis above, the quantities $\nu(q, B)$ and $\nu^{\mathrm{ns}}(q,B)$ already take into account the condition $d\to\infty$. However, one can obtain an effective estimate by more careful computation (especially by analyzing equation~\eqref{eq:mu-ns-q-B} more closely). We could get the same final answer by considering degree $d$ plane curves with $d>q^{3(q^2+q+1)+\epsilon}$ instead of letting $d\to\infty$.
\end{rem}

\begin{rem}
The class of geometrically irreducible plane curves includes smooth curves. Moreover, it is known that the density of geometrically irreducible curves of degree $d$ over a fixed $\F_q$ approaches $1$ as $d\to\infty$ \cite{P04}*{Proposition 2.7}. The same result as in Theorem~\ref{thm:smooth-blocking-independence} will hold, that is, being blocking and geometrically irreducible are asymptotically independent. One can combine the proof idea in our result with the analysis in Poonen~\cite{P04}*{Proposition 2.7} to obtain a better effective version for the statement involving independence. For example, the bound $d>q^{\varepsilon}$ would suffice instead of $d>q^{3(q^2+q+1)+\epsilon}$ as in Remark~\ref{rem:smooth-effective-constant}.
\end{rem}

\begin{rem}
Our proof can be adapted to show the asymptotic independence for the events of being smooth and satisfying other combinatorial properties described by passing through a specific collection of points. For example, if $k$ is fixed, we can show that being a $k$-arc and being smooth are independent events. The reason is that the inequality~\eqref{ineq:1/e-and-1} holds when any other property replaces the condition of being blocking. Moreover, the upper bound in Theorem~\ref{thm:lower-upper-bounds} was the only ingredient we needed to ensure that the final limit is $1$. Any other combinatorial structure satisfying an analogous lower bound will yield the same limit.

However, without an analogue of Theorem~\ref{thm:lower-upper-bounds}, we cannot guarantee that the final limit is $1$. In particular, consider the event $E$ that a random plane curve is space-filling (that is, passing through all $q^2+q+1$ distinct $\F_q$-points of $\bP^2$). Then $E$ is not independent of the event that a random plane curve is smooth, and the limiting ratio of the respective probabilities is $1/e$.
\end{rem}

\section{Proportion of blocking curves over small finite fields}\label{sect:proportion-small-fields}

In this section, we have used SageMath \cite{sagemath} to carry out the exact computation for $\nb(q)$ for $q=2, 3, 4$ by applying the explicit formula given in Proposition~\ref{proportion-of-curves}. The formula involves counting all possible frequencies of the number of $\F_q$-rational points contained in a union of $k$ distinct $\F_q$-lines for $k=1, 2, \ldots, q^2+q+1$. The formula directly provides an algorithm of running time $O(q^2 2^{q^2+q+1})$, which is infeasible for $q$ moderately large.

Recall that $\nb(q)$ is the limiting proportion (as $d\to\infty)$ of degree $d$ non-blocking plane curves among all degree $d$ plane curves defined over $\F_q$. As we saw earlier in Section~\ref{sect:proportion-general-case}, the constant $\nb(q)$ coincides with the density of fixed degree $d$ non-blocking curves whenever $d\geq 2q-1$. 

\begin{ex} Let us calculate $\nb(2)$. We have $q^2+q+1=7$ distinct $\F_2$-lines. Using SageMath, we have tabulated the number of points in the union of $k$ lines for $1\leq k\leq 7$.

\begin{center}
\begin{tabular}{ |c|c|c|||c|c|c|} 
 \hline
 Value of $k$ & \# of $\F_2$-points & frequency & Value of $k$ & \# of $\F_2$-points & frequency  \\ 
 \hline
 $k=1$ & $3$ & $7$ & $k=4$ & $7$ & $28$ \\ 
 $k=2$ & $5$ & $21$ & $k=5$ & $7$ & $21$ \\ 
 $k=3$ & $6$ & $28$ &  $k=6$ & $7$ & $7$\\
 $k=3$ & $7$ & $7$ &  $k=7$ & $7$ & $1$ \\
 $k=4$ & $6$ & $7$ & & & \\
 \hline
\end{tabular}
\end{center}
Applying Proposition~\ref{proportion-of-curves}, we get that for $d\geq 3$,
\begin{align*}
    \mu_d(\mathcal{E}) &= 7\cdot\left(\frac{1}{2}\right)^3 - 21\cdot\left(\frac{1}{2}\right)^5 + 28 \cdot\left(\frac{1}{2}\right)^6 + 7\cdot \left(\frac{1}{2}\right)^7 \\
    &-7\cdot\left(\frac{1}{2}\right)^6-28\cdot\left(\frac{1}{2}\right)^7 + (21-7+1)\cdot\left(\frac{1}{2}\right)^7 \\ 
    &= 7\cdot\left(\frac{1}{2}\right)^3 - 21\cdot\left(\frac{1}{2}\right)^5 + 21\cdot \left(\frac{1}{2}\right)^6 - 6\cdot \left(\frac{1}{2}\right)^7 = \frac{1}{2}
\end{align*}
We conclude that for each $d\geq 3$, the density of curves of degree $d$ over $\F_2$ that do not provide a blocking set is equal to $1/2$. In particular, $\nb(2)=1/2$. 

In contrast, the density is different when $d=1$ or $d=2$. For $d=1$, we know that $100\%$ of the curves are blocking. For $d=2$, the only blocking conics are unions of (not necessarily distinct) $\F_q$ lines. Thus, the proportion of conic blocking curves is given by:
$$
\frac{(\frac{1}{2}(q^2+q+1)(q^2+q)+(q^2+q+1))(q-1)+1}{q^6}
$$
which evaluates to $11/32$ when $q=2$. Thus, the proportion of non-blocking conics over $\F_2$ is $\frac{21}{32}$.
\end{ex}

\begin{ex} Let us calculate $\nb(3)$. We have $q^2+q+1=13$ distinct $\F_3$-lines. Using SageMath, we have tabulated the number of points in the union of $k$ lines for $1\leq k\leq 13$.

\begin{center}
\begin{tabular}{ |c|c|c||c|c|c| } 
 \hline
 Value of $k$ & \# of $\F_3$-points & frequency & 
  Value of $k$ & \# of $\F_3$-points & frequency \\ \hline
 $k=1$ & $4$ & $13$ &  $k=6$ & $13$ & $702$ \\
 $k=2$ & $7$ & $78$ &  $k=7$ & $12$ & $468$ \\
 $k=3$ & $9$ & $234$ &  $k=7$ & $13$ & $1248$ \\
 $k=3$ & $10$ & $52$ &  $k=8$ & $12$ & $117$ \\
 $k=4$ & $10$ & $234$ &  $k=8$ & $13$ & $1170$ \\
 $k=4$ & $11$ & $468$ &  $k=9$ & $12$ & $13$ \\
 $k=4$ & $13$ & $13$ & $k=9$ & $13$ & $702$ \\
 $k=5$ & $11$ & $468$ &  $k=10$ & $13$ & $286$ \\
 $k=5$ & $12$ & $702$ &  $k=11$ & $13$ & $78$ \\
 $k=5$ & $13$ & $117$ &  $k=12$ & $13$ & $13$ \\
 $k=6$ & $11$ & $78$ &  $k=13$ & $13$ & $1$ \\
 $k=6$ & $12$ & $936$ & & & \\
 \hline
\end{tabular}
\end{center}
Applying Proposition~\ref{proportion-of-curves}, we get that for $d\geq 5$,
\begin{align*}
    \mu_d(\mathcal{E}) &= 13\cdot\left(\frac{2}{3}\right)^4 - 78\cdot\left(\frac{2}{3}\right)^7 + 9 \cdot\left(\frac{2}{3}\right)^9 + (52-234) \cdot\left(\frac{2}{3}\right)^{10} + \\
    & + (-468+468-78)\cdot \left(\frac{2}{3}\right)^{11} + (702-936+468-117+13)\left(\frac{2}{3}\right)^{12}  \\  &+ (-13+117-702+1248-1170+702-286+78-13+1)\left(\frac{2}{3}\right)^{13} \\
    &= \frac{1336688}{1594323} = \frac{2^4\cdot 19\cdot 4397}{3^{13}} \approx 0.8384
\end{align*}
We conclude that for each $d\geq 5$, the density of curves of degree $d$ over $\F_3$ that do not provide a blocking set is approximately equal to $0.8384$. In particular, $\nb(3)\approx 0.8384$. 
\end{ex}

\begin{ex} The proportion of non-blocking curves over $\F_4$ is
$$
\nb(4)= \frac{2112952233969}{2199023255552} = \frac{3^6\cdot 29\cdot 67\cdot 1491727}{2^{41}}\approx 0.96086. 
$$

The table below shows how many $\F_4$-points are contained in a union of $k$ distinct $\F_4$-lines in $\mathbb{P}^2$. For example, when $k=4$, then there are $2520$ configurations of $4$ lines which contain $14$ distinct $\F_4$-points, $3360$ configurations of $4$ lines which contain $15$ distinct $\F_4$-points, and $105$ configurations which contain $17$ distinct $\F_4$-points.

\begin{center}
\begin{tabular}{ |c|c|c||c|c|c| } 
 \hline
 Value of $k$ & \# of $\F_4$-points & frequency & Value of $k$ & \# of $\F_4$-points & frequency \\ 
 \hline
 $k=1$ & $5$ & $21$ & $k=9$ & $18$ & $1120$ \\ 
 $k=2$ & $9$ & $210$ & $k=9$ & $19$ & $42840$ \\ 
 $k=3$ & $12$ & $1120$ & $k=9$ & $20$ & $151200$ \\
 $k=3$ & $13$ & $210$ &  $k=9$ & $21$ & $98770$ \\
 $k=4$ & $14$ & $2520$ &  $k=10$ & $19$ & $13860$ \\
 $k=4$ & $15$ & $3360$ & $k=10$ & $20$ & $140448$ \\
 $k=4$ & $17$ & $105$ & $k=10$ & $21$ & $198408$ \\
 $k=5$ & $15$ & $1008$ & $k=11$ & $19$ & $2520$ \\
 $k=5$ & $16$ & $10080$ & $k=11$ & $20$ & $86688$ \\
 $k=5$ & $17$ & $7560$ & $k=11$ & $21$ & $263508$ \\
 $k=5$ & $18$ & $1680$ & $k=12$ & $19$ & $210$ \\
 $k=5$ & $21$ & $21$ & $k=12$ & $20$ & $37800$ \\
 $k=6$ & $15$ & $168$ & $k=12$ & $21$ & $255920$ \\
 $k=6$ & $17$ & $18480$ & $k=13$ & $20$ & $11760$ \\
 $k=6$ & $18$ & $22680$ & $k=13$ & $21$ & $191730$  \\
 $k=6$ & $19$ & $12600$ & $k=14$ & $20$ & $2520$ \\
 $k=6$ & $21$ & $336$ & $k=14$ & $21$ & $113760$ \\
 $k=7$ & $17$ & $2520$ & $k=15$ & $20$ & $336$ \\
 $k=7$ & $18$ & $31920$ & $k=15$ & $21$ & $53928$  \\
 $k=7$ & $19$ & $55440$ & $k=16$ & $20$ & $21$  \\
 $k=7$ & $20$ & $23520$ & $k=16$ & $21$ & $20328$\\
 $k=7$ & $21$ & $2880$ & $k=17$ & $21$ & $5985$ \\
 $k=8$ & $18$ & $10290$ & $k=18$ & $21$ & $1330$ \\
 $k=8$ & $19$ & $73080$ & $k=19$ & $21$ & $210$ \\
 $k=8$ & $20$ & $93240$ & $k=20$ & $21$ & $21$ \\
 $k=8$ & $21$ & $26880$ & $k=21$ & $21$ & $1$ \\
 \hline
\end{tabular}
\end{center}
\end{ex}

\section*{Acknowledgements}
The authors thank Matilde Lal\'{\i}n and Greg Martin for helpful discussions. The second author is supported by an NSERC Discovery grant. The authors are also grateful for two anonymous referees for their valuable comments.

\bibliographystyle{alpha}
\bibliography{biblio.bib}

\end{document}